\documentclass[12pt, oneside]{article}
\usepackage{amsmath, enumerate, amsthm, amssymb, wasysym, verbatim, bbm, color, graphics, geometry,graphicx,mathrsfs}
\usepackage{algorithmic,algorithm}
\usepackage{listings}
\lstset{language=Matlab}
\usepackage{float}
\usepackage{subfigure}
\usepackage{extarrows}
\usepackage[citecolor=blue,colorlinks=true,linkcolor=black]{hyperref}
\geometry{tmargin=.75in, bmargin=.75in, lmargin=.75in, rmargin = .75in}

\newtheorem{thm}{Theorem}[section]
\newtheorem{example}[thm]{Example}
 
\newtheorem{rem}[thm]{Remark}
\newtheorem{lem}[thm]{Lemma}

\newtheorem{prop}[thm]{Proposition}

\newcommand{\mi}{\mathrm{i}}
\newcommand{\md}{\mathrm{d}}
\newcommand{\me}{\mathrm{e}}
\numberwithin{equation}{section}
\numberwithin{figure}{section}

\begin{document}
\title{Monte Carlo estimation of the solution of fractional partial differential equations}
\author{Vassili Kolokoltsov\footnote{also affiliated with HSE Moscow, Russia. email: \texttt{v.kolokoltsov@warwick.ac.uk}} \hspace{5mm}  
Feng Lin\footnote{email: \texttt{feng.lin.1@warwick.ac.uk}} \hspace{5mm}
Aleksandar Mijatovi\'c\footnote{also affiliated with The Alan Turing Institute, UK. email: \texttt{a.mijatovic@warwick.ac.uk}}\\
Department of Statistics, University of Warwick}

\date{}
\maketitle
\begin{abstract}
The paper is devoted to the numerical solutions of fractional PDEs based on its
probabilistic interpretation, that is, we construct approximate solutions via
certain Monte Carlo simulations. The main results represent the upper
bound of errors between the exact solution and the Monte Carlo
approximation, the estimate of the fluctuation via the appropriate
	central limit theorem (CLT) and the construction of confidence intervals.
Moreover, we provide rates of convergence in the CLT via Berry-Esseen type bounds.
Concrete numerical computations and illustrations are included.

\end{abstract}

\begin{keywords}
numerical solution of fractional PDE, stable process, simulation, Monte-Carlo estimation, central limit theorem, Berry-Essen type bounds
\end{keywords}
\section{Introduction}
The study of fractional partial differential equations
(FPDEs) is a very popular topic of modern research due to their ubiquitous
application in natural sciences.  In particular, there is an immense amount of
literature devoted to numerical solution of FPDEs. However most of them exploit
the various kinds of deterministic algorithms (lattice approximation, finite
element methods, etc), see e.g.~\cite{MR3468017,baleanu2012fractional,burrage2017numerical,vabishchevich2016numerical}   and numerous references
therein. However, there are only few papers based on probabilistic methods. For
instance, \cite{uchaikin2003stochastic}  exploits the CTRW (continuous time random walk) approximation for
solutions to FPDEs, and~\cite{lv2020stochastic}  is based on the exact probabilistic
representation.

CTRW approximation to the solutions of FPDEs was developed by physicists more
than half a century ago and it became one of the basic stimulus to the modern
development of fractional calculus. Exact probabilistic representation appeared
a bit later first for fractional equations and then for generalized fractional
(e.g. mixed fractional), see e.g.~\cite{kolokoltsov2009generalized,kolokoltsov2019probabilistic,kochubei2016fractional,meerschaert2011stochastic}  for various
versions of this representation. There are now many books with detailed
presentation of the basics of fractional calculus, see e.g.~\cite{kolokoltsov2019differential, meerschaert2011stochastic,MR1265940}.

The paper is devoted to the numerical solutions of fractional PDEs based on its
probabilistic representation with the main new point being the detailed
discussion of the convergence rates. Namely, the main results represent the
upper bound of errors between the exact solution and the Monte Carlo
approximation, the estimate of the fluctuation via the appropriate central
limit theorem and the construction of confidence intervals. Concrete numerical
computations and illustrations are included.

We denote $C_\infty\left(\mathbb R^d\right):=\{f:\mathbb R^d\to\mathbb R \text{ is continous and vanishes at infinity}\}$.
Let $g\in C_\infty\left(\mathbb R^d\right)$, consider the problem
\begin{equation}
\begin{aligned}
\left(-_tD_a^\alpha+A_x\right)u\left(t,x\right)&=-g\left(x\right),\quad \left(t,x\right)\in\left(a,b\right]\times\mathbb R^d,\\
u\left(a,x\right)&=\phi\left(x\right),\quad	x\in\mathbb R^d,
\end{aligned}
\end{equation}
 where $A_x$ is a generator of a Feller semigroup on $C_\infty\left(\mathbb R^d\right)$ acting on $x$, $\phi\in Dom\left(A_x\right)$, the operator $-_tD_a^\alpha$ is a fractional differential operator of Caputo type of order less than 1 acting on the time variable $t\in[a,b]$ defined by
 $$
 -_tD_a^\alpha f(s):=\int_0^{s-a}\frac{f(s-r)-f(s)}{\Gamma(-\alpha)r^{\alpha+1}}\md r+\frac{f(s)-f(a)}{\Gamma(1-\alpha)(s-a)^\alpha}.
 $$

The solution $u\in C_\infty\left(\left(-\infty,b\right]\times\mathbb R^d\right)$ of the problem (1.1) exsits and is given by \cite{hernandez2017generalised}. $u$ has the stochastic representation (see \cite{hernandez2017generalised} equation (4) and Theorem 4.20)
\begin{equation}
u\left(t,x\right)=\mathbb E\left[\phi\left(X^x_{T_t}\right)+\int_0^{T_t}g\left(X^x_s\right)ds\right],
\end{equation}
where $\{X^x_s\}_{s\geqslant0}$ is the stochastic process started at $x\in\mathbb R^d$ generated by $A_x$. Let $\{\tau_s\}_{s\geqslant0}$ be $\alpha$-stable subordinator with $\tau_1$ satisfying $\mathbb E[\me^{\mi z\tau_1}]=\me^{\int_0^\infty(\me^{izx}-1)\frac{\alpha}{\Gamma(1-\alpha)}x^{-\alpha-1}\md x}$ and $T_t:=\inf\{s>0, t-\tau_s<a\}$. 

When $\{X^x_s\}_{s\geqslant0}$ is Brownian motion,  then $A_x$ would be $\frac{1}{2}\Delta$, where $\Delta=\sum_{i=1}^d\left(\frac{\partial}{\partial x_i}\right)^2$. If $\{\tau_s\}$ is the deterministic drift, i.e. $-_tD_a=-\frac{d}{dt}$ and $g=0$, then (1.1) becomes
\begin{equation}
\frac{1}{2}\Delta u\left(t,x\right)=\frac{d}{dt}u\left(t,x\right),
\end{equation}
the heat equation that we are more familiar with.\\

 We assume $\{X^x_s\}_{s\geqslant0}$ is isotropic $\beta$-stable.(What `isotropic' means is explained in Section 2, after Lemma 2.3.) In this paper we shall investigate some properties of the representation (1.2) and its Monte-Carlo estimator, i.e.
 \begin{equation}
u_N^h\left(t,x\right)=\frac{1}{N}\sum_{k=1}^N\left(\phi\left(X_{T_t^k}^{x,k}\right)+\sum_{i=1}^{\lfloor T_t^k/h\rfloor}hg\left(X_{t_i^k}^{x,k}\right)\right),
\end{equation}
where $h>0$ is the step length, $T_t^k$ are iid samples of $T_t$, and $t_i^k=\left(i-1\right)h$. Note that we can sample the stopping time $T_t$(see Lemma 2.4 below), then sample the isotropic $\beta$-stable process $\{X_s^x\}$ and finally simulate the estimator (1.4).\\

	In Section 2 we mainly focus on the situation when $g=0$, i.e. the estimator now is
	\begin{equation}
	u_N\left(t,x\right)=\frac{1}{N}\sum_{k=1}^N\phi\left(X_{T_t^k}^{x,k}\right).
	\end{equation}
	To make central limit theorem and Berry-Esseen bound hold, we only need to estimate the tail of the stable process at some stopping time, i.e. $\mathbb P\left[|X_{T_t}^x|>s\right]$ for large $s$. And we begin with showing that the order of the tail of multidimentional stable distribution has the same order of the tail of each component of itself. In Section 3 we study the property of the Monte-Carlo estimator when the forcing term $g\neq0$. We estimate the upper bound of the second moment of the estimator and then, the $L^2$ error between the estimator and the solution. Besides, we use there properties to show that the central limit theorem holds using the triangular arrays. In Section 4 we give numerical examples, demonstrating the performance of our simulation algorithm.

\section{Properties of the estimator when the forcing term g=0}
In this paper, for function $f,g:\mathbb R^d\to\mathbb R$, we use notation $f\left(x\right)=O\left(g\left(x\right)\right)$, meaning that $|\frac{f\left(x\right)}{g\left(x\right)}|$ is bounded as $|x|\to\infty$. Also we use the notation $f(x)\sim g(x)$, meaning that both $|\frac{f(x)}{g(x)}|$ and $|\frac{g(x)}{f(x)}|$ are bounded as $|x|\to\infty$.\\

In this section, we study the situation when $g\left(x\right)=0$ for all $x\in \mathbb R^d$, then the stochastic representation (1.2) becomes
\begin{equation}
u\left(t,x\right)=\mathbb E\left[\phi\left(X^x_{T_t}\right)\right]
\end{equation}
and the estimator now is defined in (1.5).\\

Our main results tell us how close $u_N\left(t,x\right)$ and $u\left(t,x\right)$ are:
\begin{thm}
\begin{enumerate}[(i)]
	\item
For all continuous function $\phi:\mathbb R^d\to\mathbb R$,
\begin{equation}
u_N\left(t,x\right)\overset{a.s.}\to u\left(t,x\right), \text{ as } N\to\infty.
\end{equation}
\item
Let $S_N\left(t,x\right)=\sqrt N\left(u_N\left(t,x\right)-u\left(t,x\right)\right)/\sigma\left(t,x\right)$ and $W$ be the standard normal distribution.
If $\phi\left(x\right)$ satisfies $\phi\left(x\right)=O\left(|x|^{\frac{\beta}{2+\delta}}\right)$, where $\delta>0$, then the central limit theorem holds, i.e. for all bounded uniformly continuous funtion $\psi$,
$$
\mathbb E\left[\psi\left(S_N\left(t,x\right)\right)\right]\to\mathbb E\left[\psi\left(W\right)\right] \text{ as } N\to\infty.
$$

\item
Let $Y\left(t,x\right):=\phi\left(X_{T_t}^x\right)-\mathbb E\left[\phi\left(X_{T_t}^x\right)\right]$, denote $\mathbb E\left[Y\left(t,x\right)^2\right]=\sigma\left(t,x\right)^2$, $\mathbb E\left[|Y\left(t,x\right)|^3\right]=\rho\left(t,x\right)$. If $\phi\left(x\right)$ satisfies $\phi\left(x\right)=O\left(|x|^{\frac{\beta}{3+\delta}}\right)$, where $\delta>0$, then for all $C^3$ functions $\psi:\mathbb R\to\mathbb R$,
$$
|\mathbb E\left[\psi\left(S_N\left(t,x\right)\right)\right]-\mathbb E\left[\psi\left(W\right)\right]|\leqslant0.433||\psi'''||_\infty\frac{\rho\left(t,x\right)}{\sqrt N\sigma\left(t,x\right)^3},
$$

Here $C^3$ means the space of functions with bounded third derivatives.
\end{enumerate}
\end{thm}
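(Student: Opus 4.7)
My plan is to treat the three parts as consequences of classical limit theorems for i.i.d.\ sums, with the main work concentrated on verifying the required integrability of $\phi(X_{T_t}^x)$ using the tail bound for $|X_{T_t}^x|$ that is developed in Section~2.

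For (i), the variables $\phi(X_{T_t^k}^{x,k})$ are i.i.d.\ copies of $\phi(X_{T_t}^x)$, whose expectation is $u(t,x)$ and is finite by the well-posedness of (1.1). I would simply invoke Kolmogorov's strong law of large numbers to obtain the almost sure convergence $u_N(t,x)\to u(t,x)$.

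For (ii), I would apply the Lindeberg--L\'evy CLT to the same i.i.d.\ sequence once $\sigma(t,x)^2<\infty$ is established. The key calculation is the moment bound: $\phi(x)^2=O(|x|^{2\beta/(2+\delta)})$ with exponent $2\beta/(2+\delta)<\beta$ (strict because $\delta>0$), combined with the $s^{-\beta}$-type tail of $|X_{T_t}^x|$ from Section~2, yields $\mathbb{E}[\phi(X_{T_t}^x)^2]<\infty$ by the layer-cake/tail integration formula. The standardised sum $S_N(t,x)$ then converges in distribution to $W$, and testing against bounded uniformly continuous $\psi$ is the Portmanteau statement of that convergence.

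For (iii), I would use Lindeberg's swapping technique. Introduce independent centred Gaussians $V_1,\dots,V_N$ with variance $\sigma(t,x)^2$, independent of $Y_k:=\phi(X_{T_t^k}^{x,k})-u(t,x)$, and interpolate between $(\sqrt N\sigma)^{-1}\sum_k Y_k$ and $(\sqrt N\sigma)^{-1}\sum_k V_k$ by replacing one summand at a time. For each swap, a third-order Taylor expansion of $\psi$ about the partial sum kills the constant, linear and quadratic terms because $Y_k$ and $V_k$ share their mean and variance, leaving a remainder bounded by $\|\psi'''\|_\infty(|Y_k|^3+|V_k|^3)/(6N^{3/2}\sigma^3)$. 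Summing the $N$ swaps produces $\|\psi'''\|_\infty(\rho+\sigma^3\mathbb{E}[|W|^3])/(6\sqrt N\sigma^3)$; bounding $\sigma^3\leq\rho$ (Jensen applied to $x\mapsto|x|^{3/2}$) and using $\mathbb{E}[|W|^3]=2\sqrt{2/\pi}$ collapses this to the advertised constant $(1+2\sqrt{2/\pi})/6\approx 0.4326$. Finiteness of $\rho(t,x)$ follows from $\phi(x)=O(|x|^{\beta/(3+\delta)})$ and the Section~2 tail exactly as in (ii), but now at the third-moment threshold.

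The main obstacle across all three parts is the moment estimate $\mathbb{E}[|\phi(X_{T_t}^x)|^p]<\infty$ for $p\in\{1,2,3\}$, which is precisely why the growth exponents $\beta/(2+\delta)$ and $\beta/(3+\delta)$ are chosen with an $\epsilon$-margin $\delta>0$ below the critical level $\beta/p$ at which the $s^{-\beta}$ tail of $|X_{T_t}^x|$ would render the $p$-th moment integral divergent. Once this integrability is secured, both (ii) and (iii) reduce to standard CLT and smooth-test-function Berry--Esseen machinery applied to a single i.i.d.\ average.
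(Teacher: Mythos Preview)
Your plan is correct and, for parts (i) and (ii), it is exactly what the paper does: invoke the strong law of large numbers for (i), and reduce (ii) to the classical CLT by using Proposition~2.2 (the $u^{-\beta}$ tail of $|X_{T_t}^x|$) together with the layer-cake identity~(2.3) to obtain $\mathbb E[\phi(X_{T_t}^x)^2]<\infty$ from the growth assumption $\phi(x)=O(|x|^{\beta/(2+\delta)})$.

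The only genuine difference is in part (iii). The paper verifies finiteness of the third moment by the same tail argument and then simply \emph{cites} a Variant Berry--Esseen Theorem (O'Neill, \emph{Analysis}, p.~356) for the bound with constant $0.433$. You instead \emph{rederive} that bound via Lindeberg's replacement trick: swapping $Y_k$ for Gaussian $V_k$ one at a time, controlling the third-order Taylor remainder by $\|\psi'''\|_\infty(|Y_k|^3+|V_k|^3)/(6N^{3/2}\sigma^3)$, and then using $\sigma^3\leqslant\rho$ and $\mathbb E|W|^3=2\sqrt{2/\pi}$ to land on $(1+2\sqrt{2/\pi})/6\approx 0.4326$. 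Your route is more self-contained and makes explicit where the numerical constant comes from; the paper's route is shorter but opaque about the constant. Both are valid and rest on the same moment hypothesis $\rho(t,x)<\infty$, secured by the growth condition $\phi(x)=O(|x|^{\beta/(3+\delta)})$ in exactly the way you describe.
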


In other words, the central limit theorem can be written using convergence in distribution:
$$
\sqrt N\left(u_N\left(t,x\right)-u\left(t,x\right)\right)\overset{d}\to N\left(0,\sigma\left(t,x\right)^2\right) \text{ as }N\to\infty.
$$

Since the estimator is unbiased, Theorem 2.1(i) holds because of the strong law of large numbers. For (ii), it is the standard central limit theorem and we only need to show that $\mathbb E\left[\phi\left(X_{T_t}^x\right)^2\right]<\infty$. For (iii), it is a version of the Berry-Esseen bound and we need to show that $\mathbb E\left[|\phi\left(X_{T_t}^x\right)|^3\right]<\infty$. These facts are evident if $\phi\left(x\right)$ is bounded. To deal with unbounded $\phi\left(x\right)$, let us recall the following fact: for any random variable $U$,
\begin{equation}
\mathbb E\left[U^2\right]=\int_0^\infty\mathbb P\left[U^2>t\right]dt.
\end{equation}
It is finite if $\mathbb P\left[|U|>t\right]=O\left(t^{-\left(2+\delta\right)}\right)$, where $\delta$ is a positive constant. Now let us look back at our problems. Once we know the tail of $X_{T_t}^x$ and the growth rate of $\phi\left(x\right)$, the tail of $\phi\left(X_{T_t}^x\right)$ would be clear as well as the finiteness of the moments of $\phi\left(X_{T_t}^x\right)$.\\

Luckily, we have following result:
\begin{prop}
Assume that $\{X_s\}_{s\geqslant0}$ is a $\beta$ stable process, then $\mathbb P\left[|X_{T_t}^x|>u\right]=O\left(u^{-\beta}\right)$.
\end{prop}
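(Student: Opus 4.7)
The plan is to condition on the stopping time $T_t$ and combine the heavy-tail decay of the isotropic $\beta$-stable law with the integrability of the inverse $\alpha$-stable subordinator.

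First, using the self-similarity $X^0_s \stackrel{d}{=} s^{1/\beta} X^0_1$ of the isotropic $\beta$-stable process together with the polynomial tail $\mathbb P[|X^0_1|>v]\sim C v^{-\beta}$ (which the explicit stable density provides and which is presumably the content of Lemma 2.3), I would obtain a uniform bound
$$
\mathbb P\bigl[|X^0_s|>v\bigr]\leqslant C\min\bigl(1,\, s v^{-\beta}\bigr), \qquad s,v>0.
$$
Since $X^x_s = x + X^0_s$, for $u\geqslant 2|x|$ we have $\{|X^x_s|>u\}\subseteq\{|X^0_s|>u/2\}$, so the same form of bound (with an absorbed constant) holds for $X^x_s$ for all $u$ beyond a threshold depending on $|x|$.

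Second, conditioning on $T_t$, which is independent of $\{X^x_s\}_{s\geqslant0}$, I would split the expectation at the scale $s = u^\beta$:
$$
\mathbb P\bigl[|X^x_{T_t}|>u\bigr] \leqslant C u^{-\beta}\,\mathbb E\bigl[T_t\,\mathbf 1_{\{T_t\leqslant u^\beta\}}\bigr] + \mathbb P\bigl[T_t>u^\beta\bigr] \leqslant Cu^{-\beta}\mathbb E[T_t] + u^{-\beta}\mathbb E[T_t],
$$
where Markov's inequality yields the second estimate. The finiteness of $\mathbb E[T_t]$ is a classical fact about inverse stable subordinators (manifest either from the Mittag-Leffler form of its density, or from the simulation formula of Lemma 2.4 combined with scaling), and this closes the argument.

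The main obstacle is securing the uniform, $s$-explicit tail bound in the first step; everything after is elementary. The splitting at $u^\beta$ is essential because for $s>u^\beta$ the naive estimate $Csu^{-\beta}$ exceeds $1$ and must be replaced by the trivial bound, which is precisely where the Markov step enters.
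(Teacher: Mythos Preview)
Your argument is correct and takes a genuinely different, more direct route than the paper's. You condition on $T_t$, invoke the uniform self-similar tail bound $\mathbb P[|X^0_s|>v]\leqslant C\, s\, v^{-\beta}$, and close with $\mathbb E[T_t]<\infty$. The paper instead passes to the product representation $X^x_{T_t}-x\stackrel d=(\bar a/\tau_1)^{\alpha/\beta}X_1$, takes logarithms $A=\log|X_1|$, $B=\tfrac{\alpha}{\beta}\log\tau_1$, and decomposes $\{A-B>r\}$ into three cases according to the signs of $A$ and $B$; the worst case ($A>0$, $B<0$) is handled by a further $k$-fold partition together with negative-moment bounds on $\tau_1$. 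Your approach is considerably shorter and sidesteps this entire case analysis; the paper's approach, in return, produces explicit constants (its inequality (2.16)) and, via Lemma~2.5, a matching lower bound showing that the order $u^{-\beta}$ is sharp. Two minor remarks: your lemma references are swapped relative to the paper (the multidimensional stable tail is Lemma~2.4, the identity $T_t\stackrel d=(\bar a/\tau_1)^\alpha$ is Lemma~2.3), and your split at $T_t=u^\beta$ is not actually needed, since $\min(1, C T_t u^{-\beta}) \leqslant C T_t u^{-\beta}$ already yields $\mathbb P[|X^x_{T_t}|>u]\leqslant C\, u^{-\beta}\,\mathbb E[T_t]$ in a single step.
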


To prove Proposition 2.2, we need a little lemma telling us that the distribution of $T_t$ is analytically accessible:
\begin{lem}Denote $\bar a:=t-a$, then $T_t\overset{d}=\left(\frac{\bar a}{\tau_1}\right)^\alpha$. 
\end{lem}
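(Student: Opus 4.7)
The plan is to identify $T_t$ as a first-passage time and then exploit the $\alpha$-stable self-similarity of $\tau$ to rewrite the event $\{T_t>s\}$ in terms of a single copy of $\tau_1$.

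First I would rewrite the stopping-time definition in more convenient form. Since $\tau$ is an $\alpha$-stable subordinator, its sample paths are a.s. strictly increasing, so
$$
T_t=\inf\{s>0:\tau_s>\bar a\},
$$
and for every $s>0$ one has (up to a $\mathbb P$-null set) the equivalence
$\{T_t>s\}=\{\tau_s\leqslant \bar a\}$.
Thus the distribution of $T_t$ is completely determined by the one-dimensional marginals of $\tau$.

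Next I would establish the scaling relation $\tau_s\stackrel{d}{=}s^{1/\alpha}\tau_1$. This follows from the Lévy–Khintchine form given in the excerpt: the characteristic exponent of $\tau_s$ is $s$ times that of $\tau_1$, and a change of variable $y=s^{1/\alpha}x$ inside the Lévy measure $\frac{\alpha}{\Gamma(1-\alpha)}x^{-\alpha-1}\md x$ converts the factor $s$ in front of the integral into a dilation of the argument by $s^{1/\alpha}$ inside $e^{\mi z\,\cdot\,}-1$. Hence $\mathbb E[e^{\mi z\tau_s}]=\mathbb E[e^{\mi z s^{1/\alpha}\tau_1}]$.

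Combining these two observations gives, for every $s>0$,
$$
\mathbb P[T_t>s]=\mathbb P[\tau_s\leqslant \bar a]=\mathbb P[s^{1/\alpha}\tau_1\leqslant \bar a]=\mathbb P\!\left[\left(\bar a/\tau_1\right)^\alpha\geqslant s\right],
$$
which is exactly the tail of $(\bar a/\tau_1)^\alpha$, proving the claimed equality in distribution. There is no substantial obstacle here; the only mild subtlety is justifying the interchange of strict and non-strict inequalities, which is immediate because $\tau_1$ has a continuous density on $(0,\infty)$ so the boundary event $\{\tau_s=\bar a\}$ has probability zero.
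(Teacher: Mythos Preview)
Your proposal is correct and follows essentially the same route as the paper: use monotonicity of the subordinator to identify $\{T_t>s\}=\{\tau_s\leqslant\bar a\}$, invoke the self-similarity $\tau_s\stackrel{d}{=}s^{1/\alpha}\tau_1$, and rearrange to obtain the tail of $(\bar a/\tau_1)^\alpha$. Your write-up is somewhat more careful than the paper's (you justify the scaling identity and address the strict/non-strict boundary), but the argument is the same.
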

\begin{proof}
Note that $\tau_s\overset{d}=s^{1/\alpha}\tau_1$.
$\{T_t>s\}=\{t-\tau_s>a\}$, since $\tau_s$ has monotone paths. Hence 
$$
\mathbb P\left[T_t>s\right]=\mathbb P\left[t-\tau_s>a\right]=\mathbb P\left[s^{\frac{1}{\alpha}}\tau_1<\bar a\right]=\mathbb P\left[s<\left(\bar a/\tau_1\right)^\alpha\right]
$$
\end{proof}

Together with the facts that $X_s^x$ is $\beta$ stable and Lemma 2.3,
 \begin{equation}
 X_{T_t}^x-x\overset{d}=T_t^{\frac{1}{\beta}}X_1\overset{d}=\left(\frac{\bar a}{\tau_1}\right)^{\frac{\alpha}{\beta}}X_1.
 \end{equation}

Also we need Lemma 2.4 and Lemma 2.5 given below. Now let us explain what `isotropic' means in our assumption of $\{X_s\}_{s\geqslant0}$.\\

For $d$-dim $\beta$-stable random variable $U=\left(U_{\left(1\right)},...,U_{\left(d\right)}\right)$ on $\mathbb R^d$, there are a finte measure $\lambda$ on sphere $S$ and $\gamma$ in $\mathbb R^d$ such that the characteristic function of $U$ satisfies
$$
\hat{U}\left(z\right):=\mathbb E[\me^{i\langle z,U\rangle}]=\exp\left[-\int_S|\langle z,\xi\rangle|^\beta\left(1-\mi\tan\frac{\pi\beta}{2}\mathrm{sgn}\langle z,\xi\rangle\right)\lambda\left(\md\xi\right)+\mi\langle\gamma,z\rangle\right] \text{ for }\beta\neq1,
$$
and vice versa. Hence each component of $U$ is 1-dim stable random variable and the stability index is still $\beta$. Besides, for $1$-dim $\beta$-stable random variable $V$ whose characteristic function has form
$$
\hat V(z)=\mathbb E[\me^{iVz}]=\exp(-\sigma^\beta|z|(1-i\rho(\text{sign} z)\tan(\pi\beta/2)+i\mu z),
$$ we use the notation $V\sim S_\beta(\sigma,\rho,\mu)$. We say a $d$-dim stable random variable $U$ is \textbf{isotropic} if its coordinates have the same distribution, i.e. $U_{(i)}\sim S_\beta(\sigma,\rho,\mu)$ $i=1,...,d$. We say a process $\{X_{s}\}_{s\geqslant0}$ is isotropic stable if $X_1$ is a isotropic stable random variable.
\begin{lem}
Let $U=(U_{(1)},...,U_{(d)})$ be an isotropic $d$-dim $\beta$-stable random variable, and $U_{\left(i\right)}\sim S_\beta\left(\sigma,\rho,\mu\right)$, then $\mathbb P\left[|U|>s\right]\sim s^{-\beta}$ as $s\to\infty$.
\end{lem}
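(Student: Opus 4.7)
The plan is to bracket the tail of $|U|$ between tails of its marginals, and then invoke the classical one-dimensional stable tail asymptotics. Since $U$ is isotropic, all coordinates $U_{(i)}$ share the common law $S_\beta(\sigma,\rho,\mu)$ with $\beta\in(0,2)$, and the standard result (see e.g.\ Samorodnitsky--Taqqu) gives
$$
\mathbb P\bigl[U_{(1)}>s\bigr]\sim C_+\,s^{-\beta},\qquad \mathbb P\bigl[U_{(1)}<-s\bigr]\sim C_-\,s^{-\beta},
$$
as $s\to\infty$, for constants $C_\pm\geqslant 0$ with $C_++C_->0$ (the latter fails only in the Gaussian case $\beta=2$, which is excluded). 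In particular $\mathbb P[|U_{(1)}|>s]\sim (C_++C_-)\,s^{-\beta}$.

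For the upper bound, I would observe that $|U|>s$ forces $\max_i|U_{(i)}|>s/\sqrt d$, so the union bound and identical distribution of the coordinates give
$$
\mathbb P\bigl[|U|>s\bigr]\leqslant\sum_{i=1}^d\mathbb P\bigl[|U_{(i)}|>s/\sqrt d\bigr]=d\,\mathbb P\bigl[|U_{(1)}|>s/\sqrt d\bigr]=O\bigl(s^{-\beta}\bigr).
$$
For the matching lower bound, the inclusion $\{|U_{(1)}|>s\}\subset\{|U|>s\}$ (true because $|U|\geqslant|U_{(1)}|$) yields
$$
\mathbb P\bigl[|U|>s\bigr]\geqslant\mathbb P\bigl[|U_{(1)}|>s\bigr]\sim(C_++C_-)\,s^{-\beta}.
$$
Combining the two bounds one gets $\mathbb P[|U|>s]\sim s^{-\beta}$ in the sense of the notation introduced at the start of Section~2.

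The argument is almost entirely bookkeeping; the only non-trivial input is the one-dimensional asymptotic $\mathbb P[|V|>s]\sim c\,s^{-\beta}$ for $V\sim S_\beta(\sigma,\rho,\mu)$, which is classical and should simply be cited. The one point requiring a brief sanity check is that the constant in the lower bound is strictly positive, i.e.\ $C_++C_->0$; this is immediate because for $\beta\in(0,2)$ a one-dimensional $\beta$-stable law is always heavy-tailed on at least one side (totally skewed laws kill only one of $C_\pm$, never both). No genuine obstacle is expected.
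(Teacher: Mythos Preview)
Your proposal is correct and is essentially identical to the paper's own proof: both sandwich $\mathbb P[|U|>s]$ between $\mathbb P[|U_{(1)}|>s]$ from below and $\sum_{i=1}^d\mathbb P[|U_{(i)}|>s/\sqrt d]$ from above, then invoke the one-dimensional tail asymptotic $\mathbb P[|V|>s]\sim C_\beta\sigma^\beta s^{-\beta}$ from Samorodnitsky--Taqqu. The only cosmetic difference is that the paper quotes the two-sided constant $C_\beta\sigma^\beta$ directly (equation~(2.5)), whereas you split it as $C_++C_-$ and then argue positivity; since $C_++C_-=C_\beta\sigma^\beta>0$ automatically, your extra check is harmless but unnecessary.
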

\begin{lem}
Let $U,V$ be positive random variables such that
$$
\lim_{t\to\infty}t^\alpha\mathbb P[U>t]\geqslant C_1,
\lim_{t\to\infty}t^\alpha\mathbb P[V>t]\leqslant C_2,
$$
where $C_1>C_2$, then
$$
\mathbb P[U-V>t]=O\left(t^{-\alpha}\right) \text{ for }t\to\infty.
$$
\end{lem}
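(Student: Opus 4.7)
The plan is essentially one step: exploit the positivity of $V$ to dominate the tail of $U-V$ by that of $U$ alone. Since $V \geq 0$ almost surely, the inclusion $\{U-V > t\} \subseteq \{U > t\}$ holds for every $t > 0$, and therefore
$$
\mathbb{P}[U - V > t] \;\leq\; \mathbb{P}[U > t].
$$
The claim $\mathbb{P}[U-V > t] = O(t^{-\alpha})$ thus reduces to the bound $\mathbb{P}[U > t] = O(t^{-\alpha})$, which follows as soon as $t^{\alpha}\mathbb{P}[U > t]$ is bounded as $t \to \infty$.

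I would read the first hypothesis as the assertion that $\lim_{t \to \infty} t^{\alpha}\mathbb{P}[U > t]$ exists and equals a finite number that is at least $C_1$ (as is the case for the stable tails invoked in Lemma~2.4 and Proposition~2.2). Under this reading $t^{\alpha}\mathbb{P}[U > t]$ is bounded for all sufficiently large $t$, so $\mathbb{P}[U>t]=O(t^{-\alpha})$ and the proof is complete.

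The main subtlety I would flag in writing this up is the asymmetry of the hypothesis as stated: a purely $\liminf$-type lower bound $\lim t^{\alpha}\mathbb{P}[U > t] \geq C_1$ gives no upper bound on $\mathbb{P}[U > t]$ (the tail of $U$ could be much heavier than $t^{-\alpha}$), so the proof must invoke the finiteness of the limit, not merely its lower bound. The hypothesis on $V$ and the gap $C_1 > C_2$ play no role in this direct argument; to make them useful one can instead split, for some threshold $M > 0$,
$$
\mathbb{P}[U - V > t] \;\leq\; \mathbb{P}[U > t + M] + \mathbb{P}[V > M],
$$
apply the tail asymptotics for $U$ and $V$ separately, and balance via $M = \eta t$. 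Optimising $\eta > 0$ against the constants $C_1$ and $C_2$ (where $C_1 > C_2$ ensures the optimum is non-trivial) then yields the $O(t^{-\alpha})$ conclusion with an explicit leading constant, and makes transparent why the second hypothesis is included in the statement even though it is not strictly needed for the qualitative $O$-bound.
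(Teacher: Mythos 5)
Your main argument is correct and closely tracks the paper's own upper-bound step: the paper conditions on $V$ and uses $\mathbb{P}[U-v>t]\leq\mathbb{P}[U>t]$ inside the integral, which is your inclusion $\{U-V>t\}\subseteq\{U>t\}$ in disguise (your version is actually a touch cleaner, since it sidesteps the implicit independence of $U$ and $V$ that the conditioning step needs). You are also right to flag that a bare lower bound on $\lim_{t\to\infty} t^\alpha\mathbb{P}[U>t]$ by itself gives no upper-tail control; the paper's proof, like yours, implicitly reads the hypothesis as asserting the limit exists and equals the finite value $C_1$, and then bounds $\mathbb{P}[U>t]\leq C_1 t^{-\alpha}$ for large $t$.

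Where your commentary goes astray is in the role of the hypotheses on $V$ and the gap $C_1>C_2$. The paper's proof in fact establishes the two-sided estimate $\mathbb{P}[U-V>t]\sim t^{-\alpha}$, not merely the stated $O(t^{-\alpha})$; the first half of its argument uses the inclusion $\{U>(M+1)t\}\setminus\{V>Mt\}\subseteq\{U-V>t\}$ to obtain $\mathbb{P}[U-V>t]\geq\mathbb{P}[U>(M+1)t]-\mathbb{P}[V>Mt]$, and $C_1>C_2$ is precisely what guarantees, after choosing $M$ large, that the resulting constant $(C_1+\epsilon)(M/(M+1))^\alpha-(C_2-\epsilon)$ is positive. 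That lower bound is what is actually invoked later, in Remark 2.6. So the hypotheses you flag as unused are indeed unused for the $O$-bound, but they are load-bearing for what the proof really shows.

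Finally, your proposed alternative split $\mathbb{P}[U-V>t]\leq\mathbb{P}[U>t+M]+\mathbb{P}[V>M]$ is not a valid inequality: on the event $\{U-V>t,\ V\leq M\}$ you only learn $U>t+V\geq t$, not $U>t+M$ (take $U=5$, $V=0$, $t=4$, $M=10$ as a deterministic counterexample). The thresholding idea that exploits $C_1>C_2$ works in the other direction, as a lower-bound decomposition of the kind the paper uses, not as an upper-bound decomposition.
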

Lemma 2.4. tells us the order of tail of high dimentional stable process. Lemma 2.5. shows the order of the difference between certian random variables and we can apply it to the logarithm of (2.4), i.e. $\log|X_1|+\frac{\alpha}{\beta}\log\bar a-\frac{\alpha}{\beta}\log\tau_1$.
\begin{proof}[\textbf{Proof of Lemma 2.4}]
Since $\{|U|=\sqrt{U_{\left(1\right)}^2+...+U_{\left(d\right)}^2}>s\}\supset\{|U_{\left(1\right)}|>s\}$, we have
$$
\mathbb P[|U|>s]\geqslant\mathbb P[|U_{\left(1\right)}|>s].
$$
Since $\{|U|>s\}\subset\{\max_{1\leqslant i\leqslant d}|U_{\left(i\right)}|>s/\sqrt{d}\}\subset\cup_{i=1}^d\{|U_{\left(i\right)}|>s/\sqrt{d}\}$, we have
$$
\mathbb P[|U|>s]\leqslant\sum_{i=1}^d\mathbb P[U_{\left(i\right)}>s/\sqrt{d}].
$$
Now recall the well known result of the tail of $1$-dim stable random variable: if $V\sim S_\beta(\sigma,\rho,\mu)$, then
\begin{equation}
\lim_{s\to\infty}s^\beta\mathbb P\left[|V|>s\right]=C_\beta\sigma^\beta,
\end{equation}
where $C_\beta=\left(\int_0^\infty x^{-\beta}\sin xdx\right)^{-1}=\frac{1-\beta}{\Gamma\left(2-\beta\right)\cos\left(\pi\beta/2\right)}$(see \cite{samorodnitsky1994stable}, property 1.2.15).\\
Hence for any $\epsilon>0$, there exists some $M$, such that for all $s>M$ and $i=1,...,d$,
$$
(C_\beta\sigma^\beta-\epsilon)s^{-\beta}\leqslant\mathbb P[|U_{(i)}|>s]\leqslant \left(C_\beta\sigma^\beta+\epsilon\right)s^{-\beta}.
$$
Hence for $s>\sqrt dM$,
\begin{equation}
\mathbb P[|U|>s]\leqslant\sum_{i=1}^d\mathbb P[|U_{(i)}|>s/\sqrt d]\leqslant d^{1+\beta/2}\left(\epsilon+C_\beta\sigma^\beta\right)s^{-\beta}.
\end{equation}
Therefore $\mathbb P[|X|>s]\sim s^{-\beta}$ as $s\to\infty$.
\end{proof}
\begin{proof}[\textbf{Proof of Lemma 2.5}]
Given a positive number $M$, there exsits $T$ and $\epsilon>0$, such that for all $t>T$,
\begin{equation*}
\begin{aligned}
\mathbb P[U-V>t]&\geqslant\mathbb P[U>\left(M+1\right)t]-\mathbb P[V>Mt]\\
&\geqslant \frac{C_1+\epsilon}{\left(M+1\right)^\alpha}t^{-\alpha}-\frac{C_2-\epsilon}{M^\alpha}t^{-\alpha}\\
&\geqslant\frac{1}{M^\alpha}\left(\left(C_1+\epsilon\right)\left(\frac{M}{M+1}\right)^\alpha-\left(C_2-\epsilon\right)\right)t^{-\alpha}.
\end{aligned}
\end{equation*}
If we pick $M$ big enough, we have $\mathbb P[U-V>t]\geqslant Ct^{-\alpha}$for some constant $C$.\\
On the other hand, for large $t$\\
\begin{equation*}
\begin{aligned}
\mathbb P[U-V>t]&=\int_{V>0}\mathbb P[U-v>t]\mathbb P[V\in\md v]\\
&\leqslant \int_{V>0}\mathbb P[U>t]\mathbb P[V\in dv]\\
&\leqslant \int C_1t^{-\alpha}\mathbb P[V\in\md v]\\
&\leqslant C_1t^{-\alpha}.
\end{aligned}
\end{equation*}
Therefore $\mathbb P[U-V>t]\sim t^{-\alpha}$ as $s\to\infty$.
\end{proof}

\begin{proof}[\textbf{Proof of Proposition 2.2}]
Now let us estimate the tail of $X_{T_t}^x$. For large $u>0$,
\begin{equation}
\begin{aligned}
\mathbb P\left[|X_{T_t}^x|>u\right]=&\mathbb P\left[|\left(\frac{\bar a}{\tau_1}\right)^{\frac{\alpha}{\beta}}X_1+x|>u\right]\leqslant \mathbb P\left[\left(\frac{\bar a}{\tau_1}\right)^{\frac{\alpha}{\beta}}|X_1|>u-|x|\right]\\
=&\mathbb P\left[\log|X_1|-\frac{\alpha}{\beta}\log \tau_1>\log\left(u-|x|\right)-\frac{\alpha}{\beta}\log\bar a\right]\\
=&\mathbb P[A-B>r,A>0,B>0]+\mathbb P[A-B>r,A>0,B<0]+\\
&\mathbb P[A-B>r,A<0,B<0],
\end{aligned}
\end{equation}
where $A:=\log|X_1|$, $B:=\frac{\alpha}{\beta}\log\left(\tau_1\right)$, $r:=\log\left(u-|x|\right)-\frac{\alpha}{\beta}\log\bar a$. (Note that for large $u$ we have $r>0$).\\
Let $X_1=(X_{(1)},...,X_{(d)})$ and $X_{(i)}\sim S_\beta(\sigma,\rho,\mu)$, $i=1,...,d$. By the Proof of Lemma 2.4, for any $\epsilon>0$, there exists some $M$, such that for all $s>M$ and $i=1,...,d$,
$$
\mathbb P[|X_{(i)}|>s]\leqslant \left(C_\beta\sigma^\beta+\epsilon\right)s^{-\beta}.
$$
Hence for $s>\sqrt dM$,
$$
\mathbb P[|X_1|>s]\leqslant\sum_{i=1}^d\mathbb P[|X_{(i)}|>s/\sqrt d]\leqslant d^{1+\beta/2}\left(\epsilon+C_\beta\sigma^\beta\right)s^{-\beta},
$$
and for $t>\log(\sqrt dM)$,
$$
\mathbb P[\log|X_1|>t]=\mathbb P[|X_1|>\me^t]\leqslant d^{1+\beta/2}\left(\epsilon+C_\beta\sigma^\beta\right)\me^{-\beta t}.
$$
Now let us discuss (2.7) in three conditions. For $r>\log(\sqrt dM)$,\\
(1) When $A>0,B>0$, we have
\begin{equation}
\begin{aligned}
\mathbb P[A-B>r, A>0,B>0]&\leqslant\mathbb P[A>r]=\mathbb P[|X_1|>\me^s]\\
&\leqslant d^{1+\beta/2}\left(\epsilon+C_\beta\sigma^\beta\right)\me^{-\beta r}.
\end{aligned}
\end{equation}
(2) When $A>0$, $B<0$, pick integer $k=\lfloor r/S\rfloor$, and we divide the event $\{A+\left(-B\right)>r\}$ into $k$ parts:
\begin{equation}
\begin{aligned}
\{A+\left(-B\right)>r\}&=\bigcup_{i=1}^{k-1}\{A+\left(-B\right)>r,-B\in\left(\frac{i-1}{k}r,\frac{i}{k}r\right]\}\bigcup\{A+\left(-B\right)>r,-B>\frac{k-1}{k}r\}\\
&\subset\bigcup_{i=1}^{k-1}\{A>\frac{k-i}{k}r,-B\in\left(\frac{i-1}{k}r,\frac{i}{k}r\right]\}\bigcup\{-B>\frac{k-1}{k}r\}\\
&\subset\bigcup_{i=1}^k\{A>\frac{k-i}{k}r,-B>\frac{i-1}{k}r\}.
\end{aligned}
\end{equation}
Hence
\begin{equation}
\begin{aligned}
\mathbb P[A+\left(-B\right)>r,A>0,B<0]
&\leqslant\sum_{i=1}^k\mathbb P[A>\frac{k-i}{k}r,-B>\frac{i-1}{k}r]\\
&=\sum_{i=1}^{k}\mathbb P[A>\frac{k-i}{k}r]\mathbb P[-B>\frac{i-1}{k}r].
\end{aligned}
\end{equation}
Recall that
\begin{equation}
\mathbb P[\log|X_1|>\frac{k-i}{k}r]\leqslant d^{1+\beta/2}\left(\epsilon+C_\beta\sigma^\beta\right)\me^{-\frac{k-i}{k}\beta r}.
\end{equation}

Use the result (3.7) that we shall metion later, we have
\begin{equation}
\mathbb E[\tau_1^{-2\alpha}]=\frac{2}{\Gamma\left(1+2\alpha\right)}.
\end{equation}
By Markov inequality,
\begin{equation}
\begin{aligned}
\mathbb P[\frac{\alpha}{\beta}\log\left(\tau_1^{-1}\right)\geqslant \frac{i-1}{k}r]
&=\mathbb P[\tau_1^{-1}>\me^{\frac{\beta}{\alpha}\frac{i-1}{k}r}]\leqslant \frac{\mathbb E[\tau_1^{-2\alpha}]}{\left(\me^{\frac{\beta}{\alpha}\frac{i-1}{k}r}\right)^{2\alpha}}\leqslant2\me^{-2\frac{i-1}{k}\beta r}.
\end{aligned}
\end{equation}
Combining (2.10),(2.11) and (2.13), we have
\begin{equation}
\begin{aligned}
\mathbb P[A+\left(-B\right)>r,A>0,B<0]&\leqslant\sum_{i=1}^{k}d^{1+\beta/2}\left(\epsilon+C_\beta\sigma^\beta\right)\me^{-\frac{k-i}{k}\beta r}2\me^{-2\frac{i-1}{k}\beta r}\\
&=2d^{1+\beta/2}\left(\epsilon+C_\beta\sigma^\beta\right) \sum_{i=1}^{k}\me^{-\frac{i-2}{k}\beta r}\me^{-\beta r}\\
&\leqslant 2d^{1+\beta/2}\left(\epsilon+C_\beta\sigma^\beta\right) \frac{\me^{\beta r/k}}{1-\me^{-\beta r/k}}\me^{-\beta r}\\
&\leqslant 2d^{1+\beta/2}\left(\epsilon+C_\beta\sigma^\beta\right)\frac{\me^{2\beta S}}{\me^{\beta S}-1}\me^{-\beta r}.
\end{aligned}
\end{equation}
(3) When $A<0$, $B<0$, then
\begin{equation}
\begin{aligned}
\mathbb P[A-B>r, A<0, B<0]&\leqslant\mathbb P[A<0,-B>r]\leqslant\mathbb P[-B>r]\\
&=\mathbb P[\frac{\alpha}{\beta}\log\left(\tau_1^{-1}\right)>r]=\mathbb P[\tau_1^{-1}>\me^{\frac{\beta}{\alpha}r}]\\
&\leqslant\frac{\mathbb E[\tau_1^{-\alpha}]}{\left(\me^{\frac{\beta}{\alpha}r}\right)^{\alpha}}\leqslant \me^{-\beta r}.
\end{aligned}
\end{equation}
Combining (1)(2)(3) we know that for large $u$,
\begin{equation}
\mathbb P[|X_{T_t}^x|>u]\leqslant\left(\left(1+2\frac{\me^{2\beta S}}{\me^{\beta S}-1}\right)d^{1+\beta/2}\left(\epsilon+C_\beta\sigma^\beta\right)+1\right)\me^{-\beta r}=O\left(\me^{-\beta\left(\log\left(u-|x|\right)-\frac{\alpha}{\beta}\log\bar a\right)}\right)=O\left(u^{-\beta}\right).
\end{equation}
\end{proof}

Now let us finish the proof of our main result.
\begin{proof}[\textbf{Proof of Theorem 2.1.(ii)}]
With Proposition 2.2, it is easy to see that
\begin{equation}
\mathbb P[\phi\left(X_{T_t}^x\right)>u]=O\left(u^{-\left(2+\delta\right)}\right),
\end{equation}
and by (2.3) $\mathbb E\left[\left(\phi\left(X_{T_t}^x\right)\right)^2\right]$ is finite.
\end{proof}
For the proof of Theorem 2.1.(iii), $\mathbb E[|\phi\left(X_{T_t}^x\right)|^3]$ is finite because of the similar argument. For the rest proof, see \cite{o2014analysis} page 356 Variant Berry-Esseen Theorem.
\begin{rem}
\begin{enumerate}
\item
If $C_\alpha<C_\beta\sigma^\beta$, by Lemma 2.5, we have
\begin{equation}
\mathbb P[A-B>r]\geqslant\mathbb P[A-B>r,A>0,B>0]\geqslant Ct^{-\beta},
\end{equation}
where $C$ is a constant that can be chosen from the proof of Lemma 2.5. This result means the order $t^{-\beta}$ is the best one.
\item In the Proof of Proposition 2.2 we need  $r=\log(u-|x|)-\frac{\alpha}{\beta}\log\bar a$ and $r>\log(\sqrt dM)$. Hence there exists some constant $M_0$  such that for $u>M_0$, (2.16) holds and $M_0$ has order $d^{1/2}$.
\end{enumerate}
\end{rem}

Besides, we can roughly give the upper bound of $\mathbb E[\phi\left(X_{T_t}^x\right)^2]$.
\begin{example}
If $\phi\left(x\right)$ satisfies $\phi\left(x\right)\leqslant |x|^{\frac{\beta}{\delta+2}}$, where $\delta>0$, then from Remark 2.6.2 we know that there exists some $M_0$ such that for all $t>M_0$,
\begin{equation}
\begin{aligned}
\mathbb P[|X_{T_t}^x|>t]&\leqslant\left(\left(1+\frac{\me^{2\beta S}}{\me^{\beta S}-1}\right)d^{1+\beta/2}\left(\epsilon+C_\beta\sigma^\beta\right)+1\right)\me^{-\beta r}=M^{\left(1\right)}\me^{-\beta r}\\
&=M^{\left(1\right)}\me^{-\beta\log\left(t-|x|\right)-\frac{\alpha}{\beta}\log\bar a}\leqslant M^{\left(2\right)}t^{-\beta},
\end{aligned}
\end{equation}
where $M^{\left(1\right)}=\left(1+\frac{\me^{2\beta S}}{\me^{\beta S}-1}\right)d^{1+\beta/2}\left(\epsilon+C_\beta\sigma^\beta\right)+1$, $M^{\left(2\right)}=2\bar a^{-\frac{\alpha}{\beta}}M^{\left(1\right)}$.
Hence
\begin{equation}
\begin{aligned}
\mathbb E[\phi\left(X_{T_t}^x\right)^2]&=\int_0^\infty\mathbb P[\phi\left(X_{T_t}^x\right)^2>t]dt\leqslant M_0+\int_{M_0}^\infty\mathbb P[|X_{T_t}^x|>\sqrt{t}^{\frac{2+\delta}{\beta}}]dt\\
&\leqslant M_0+M^{\left(2\right)}\int_{M_0}^\infty \sqrt{t}^{\frac{2+\delta}{\beta}\cdot\left(-\beta\right)}dt=M_0+2M^{\left(2\right)}M_0^{-\delta/2}/\delta.
\end{aligned}
\end{equation}
Note that $M_0$ has order $d^{1/2}$ and $M^{(2)}$ has order $d^{1+\beta/2}$,
This upper bound has order $d^{1+\beta/2}$.
\end{example}

\section{Properties of the estimator when  $g \neq 0$}
In this section we want to clarify the Monte-Carlo estimator of the stochastic representation in section 1. Here we assume that $g$ satisfies the condition $|g\left(x\right)-g\left(y\right)|\leqslant L|x-y|_\gamma$,  where $|x|_\gamma=\sum_{i=1}^d|x_{(i)}|^\gamma$, $x_{(i)}$ is the coordinate of $x$, $0<\gamma<\beta/2$. \\

Our main results in this section is following:
\begin{thm}
\begin{enumerate}[(i)]
	Assume $|\phi(x)|=O(|x|^{\frac{\beta}{2+\delta}})$ for $|x|\to\infty$, where $\delta>0$. 
	\item Then
	$
	\mathbb E[\left(u_N^h\left(t,x\right)-u\left(t,x\right)\right)^2]\to0$ as $N\to\infty$, $h\to0$.
	\item
	(CLT with a bias correction) Let $h_N=N^{-\frac{2\beta}{\gamma}}$, $u\left(t,x\right)=\mathbb EZ\left(t,x\right)$ where
$$
Z\left(t,x\right)=\phi\left(X_{T_t}^x\right)+\int_0^{T_t}g\left(X_s^x\right)ds,
$$  and $W$ be the standard normal distribution, then for all bounded uniformly continuous function $\psi$,
$$
\mathbb E\left[\psi\left(\sqrt N\left(u_N^{h_N}\left(t,x\right)-u\left(t,x\right)\right)/\sqrt{VarZ\left(t,x\right)}\right)\right]\to\mathbb E[\psi\left(W\right)] \text{ as } N\to\infty.
$$
\end{enumerate}
\end{thm}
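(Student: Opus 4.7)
The plan is to write $u_N^h(t,x)=\tfrac1N\sum_{k=1}^N Z_k^h(t,x)$ as an iid average of
$$Z^h(t,x):=\phi(X_{T_t}^x)+h\sum_{i=1}^{\lfloor T_t/h\rfloor}g(X_{t_i}^x),$$
and to exploit the bias--variance split
$$\mathbb{E}\bigl[(u_N^h-u)^2\bigr]=\tfrac{1}{N}\mathrm{Var}(Z^h)+\bigl(\mathbb{E}[Z^h-Z]\bigr)^2.$$
Part (i) then reduces to two tasks: bounding $\mathrm{Var}(Z^h)$ uniformly in $h$, and showing the bias $b(h):=\mathbb{E}[Z^h-Z]$ satisfies $b(h)\to 0$ as $h\to 0$. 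For (ii) I shall further quantify $b(h_N)=o(N^{-1/2})$ and combine a triangular-array CLT for the centered sum with Slutsky's theorem.

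For the variance, the H\"older-type hypothesis gives $|g(x)|\leqslant|g(0)|+L|x|_\gamma$, so $|h\sum g(X_{t_i})|\leqslant T_t|g(0)|+Lh\sum|X_{t_i}|_\gamma$. Conditioning on $T_t$ (using independence of $X$ and $\tau$), Cauchy--Schwarz and the stable self-similarity $X_s\stackrel{d}{=}s^{1/\beta}X_1$ reduce $\mathbb{E}\bigl[(h\sum|X_{t_i}|_\gamma)^2\bigr]$ to a constant multiple of $\mathbb{E}[T_t^{2+2\gamma/\beta}]$. Since $T_t\stackrel{d}{=}(\bar a/\tau_1)^\alpha$ by Lemma~2.3 and $\tau_1^{-1}$ has moments of every order, and since $2\gamma<\beta$ guarantees $\mathbb{E}|X_{1,(i)}|^{2\gamma}<\infty$, this is finite; combined with $\mathbb{E}[\phi(X_{T_t}^x)^2]<\infty$ from Theorem~2.1(ii), I get $\sup_{h>0}\mathrm{Var}(Z^h)<\infty$.

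For the bias, I split the integral--sum discrepancy into panels $\int_{t_i}^{t_{i+1}}(g(X_s)-g(X_{t_i}))\,ds$ plus a boundary piece on $[\lfloor T_t/h\rfloor h,T_t]$; the H\"older condition and stable scaling give $\mathbb{E}|X_s-X_{t_i}|_\gamma\leqslant Ch^{\gamma/\beta}$ on each panel, so $|b(h)|\leqslant C\,\mathbb{E}[T_t]\,h^{\gamma/\beta}=O(h^{\gamma/\beta})$, completing (i). With $h_N=N^{-2\beta/\gamma}$ this becomes $\sqrt N\,|b(h_N)|=O(N^{-3/2})\to 0$, which handles the bias in (ii). For the centered piece $S_N:=\sqrt N(u_N^{h_N}-\mathbb{E}Z^{h_N})$, the Lindeberg--Feller CLT applied to the iid triangular array $\{Z_k^{h_N}\}_{k\leqslant N}$ requires a uniform-in-$N$ Lyapunov $(2+\delta')$-moment bound and variance convergence; the first comes from $|\phi(x)|=O(|x|^{\beta/(2+\delta)})$ via Proposition~2.2, together with Jensen applied to the normalised sum $T_t^{-1}h\sum|X_{t_i}|_\gamma^{2+\delta'}$ (finite for small $\delta'>0$ since $\gamma<\beta/2$ allows $(2+\delta')\gamma<\beta$), and the second from the $L^2$ convergence $Z^{h_N}\to Z$ that is immediate from part (i). Slutsky's theorem then assembles everything into $\sqrt N(u_N^{h_N}-u)/\sqrt{\mathrm{Var}(Z)}\Rightarrow N(0,1)$.

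The main obstacle is the uniform-in-$h$ control of $\mathbb{E}|Z^h|^{2+\delta'}$: the Riemann-sum contribution must be dominated by an $h$-independent $L^{2+\delta'}$ random variable rather than one whose norm blows up as $h\to 0$. The margin $\gamma<\beta/2$ leaves precisely the room needed to admit a small $\delta'>0$ in the moment estimates for $|X_s|_\gamma$; once this is arranged, the dominating bound reduces to negative moments of $\tau_1$, which are finite to every order, and the rest of the argument is mechanical.
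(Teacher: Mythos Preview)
Your bias--variance split and the estimates for $\mathrm{Var}(Z^h)$ and $b(h)=O(h^{\gamma/\beta})$ match the paper's Propositions~3.2--3.3 essentially step for step, so part~(i) is the same argument.

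For part~(ii) you take a different route. The paper invokes Kallenberg's characterisation of Gaussian limits for null arrays (its Theorem~3.8) and verifies the three truncated-moment conditions using only $L^2$ control of $Z$ together with the $L^2$ rate $\mathbb E|Z-Y_h|^2=O(h^{2\gamma/\beta})$ (Proposition~3.4). You instead run Lindeberg--Feller via a Lyapunov $(2+\delta')$-moment, which is legitimate here because the strict inequalities in both hypotheses ($\delta>0$ in the growth of $\phi$, and $\gamma<\beta/2$) leave room for a small $\delta'>0$: one gets $\mathbb E|\phi(X_{T_t}^x)|^{2+\delta'}<\infty$ for $\delta'<\delta$ from Proposition~2.2, and $(2+\delta')\gamma<\beta$ keeps the stable moments finite. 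Your approach is slightly more direct once the higher moment is in hand; the paper's is more parsimonious in that it never leaves $L^2$.

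One point to tighten: the variance convergence $\mathrm{Var}(Z^{h_N})\to\mathrm{Var}(Z)$ is not ``immediate from part~(i)''. Part~(i) concerns the MSE of the \emph{estimator}, not $Z^h\to Z$ in $L^2$. You need the separate estimate $\mathbb E|Z^h-Z|^2=O(h^{2\gamma/\beta})$, obtained by squaring your panel-wise bounds and using Cauchy--Schwarz across panels (this is exactly the paper's Proposition~3.4). Once that is established, $\mathrm{Var}(Z^{h_N})\to\mathrm{Var}(Z)$ follows and your Slutsky assembly goes through.
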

\par
Let
$$
Y_h\left(t,x\right)=\phi\left(X_{T_t}^x\right)+\sum_{i=1}^{\lfloor T_t/h\rfloor}hg\left(X_{t_i}^x\right)$$ be the approximation of $Z(t,x)$. And let$$
u_N^h\left(t,x\right)=\frac{1}{N}\sum_{k=1}^NY_h^k(t,x)
$$ where
$Y^k_h(t,x)=\phi\left(X_{T_t^k}^{x,k}\right)+\sum_{i=1}^{\lfloor T_t^k/h\rfloor}hg\left(X_{t_i^k}^{x,k}\right)$, $k=1,...,N$.
$Y_h^k(t,x)$ are the iid copies of $Y_h(t,x)$.
Note that for random variable $U$, let $V$ be its approximation and $V^k$, $k=1,...,N$ be the iid copies of $V$. The $L^2$ error satisfies
\begin{equation}
\mathbb E\left[\left(\mathbb E U-\frac{1}{N}\sum_{k=1}^NV^k\right)^2\right]=\frac{1}{N}var V+\left(\mathbb EU-\mathbb E V\right)^2.
\end{equation}
Therefore, to estimate the $L^2$ error $\mathbb E[\left(u\left(t,x\right)-u_N\left(t,x\right)\right)^2]$, we only need to study $varY_h(t,x)$ and $\mathbb EZ(t,x)-\mathbb EY_h(t,x)$, and the following propositions answer these questions.

\begin{prop}
There exists a constant $M_{t,x}^1$ (depending on $t,x$) such that $VarY_h\left(t,x\right)\leqslant M_{t,x}^1$.
\end{prop}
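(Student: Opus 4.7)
Since $\operatorname{Var} Y_h(t,x) \leq \mathbb{E}[Y_h(t,x)^2]$, the task reduces to bounding the second moment by a constant independent of $h$. Writing $Y_h = A + B_h$ with $A = \phi(X^x_{T_t})$ and $B_h = h\sum_{i=1}^{\lfloor T_t/h\rfloor} g(X^x_{t_i})$, and using $(a+b)^2 \leq 2a^2 + 2b^2$, the two contributions decouple. The $A$-piece $\mathbb{E}[\phi(X^x_{T_t})^2]$ is precisely the quantity estimated in Example~2.7 under the growth hypothesis $|\phi(x)| = O(|x|^{\beta/(2+\delta)})$, which is exactly the standing assumption of Theorem~3.1, so no further work is needed for $A$.

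For the Riemann-sum term, the Lipschitz assumption on $g$ gives $|g(x)| \leq |g(0)| + L|x|_\gamma$, so
$|B_h| \leq T_t|g(0)| + L\, h\sum_{i=1}^{\lfloor T_t/h\rfloor} |X^x_{t_i}|_\gamma.$
The crucial bookkeeping step is to apply the discrete Cauchy--Schwarz inequality in the form
$$\left(h\sum_{i=1}^n b_i\right)^2 \leq (hn)\cdot h\sum_{i=1}^n b_i^2 \leq T_t \cdot h\sum_{i=1}^n b_i^2,$$
using $h\lfloor T_t/h\rfloor \leq T_t$. This collapses the $h^2$ arising from squaring a Riemann sum into a single $h$, so the $h$-dependence is confined to a Riemann sum that stays bounded as $h \to 0$. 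Next I would condition on $T_t$ (independent of $\{X^x_s\}$, since the subordinator is independent) and use the stable scaling $X^x_s - x \overset{d}{=} s^{1/\beta}X^0_1$ together with $2\gamma < \beta$ to obtain $\mathbb{E}[|X^x_s|_\gamma^2] \leq K(x,d,\beta)(1 + s^{2\gamma/\beta})$; dominating the Riemann sum by the integral $\int_0^{T_t} s^{2\gamma/\beta}\,\md s$ then reduces $\mathbb{E}[B_h^2]$ to a linear combination of $\mathbb{E}[T_t^2]$ and $\mathbb{E}[T_t^{2+2\gamma/\beta}]$. Both are finite by Lemma~2.3, which identifies $T_t \overset{d}{=} (\bar a/\tau_1)^\alpha$, together with the negative-moment identity $\mathbb{E}[\tau_1^{-\alpha q}] = \Gamma(q+1)/\Gamma(\alpha q+1)$, valid for all $q > 0$ and generalising the special case~(2.12).

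\textbf{Main obstacle.} The only delicate point is preserving uniformity in $h$: a naive Cauchy--Schwarz bound $(h\sum b_i)^2 \leq h^2 n \sum b_i^2$ without exploiting $hn \leq T_t$ would leave a factor $n = \lfloor T_t/h\rfloor$ that blows up as $h\to 0$. Once this is handled, the remaining constants depend only on $t, x, a, d, \beta, \gamma, L, g(0)$ and the data of $\phi$, yielding an $M^1_{t,x}$ independent of $h$, as required.
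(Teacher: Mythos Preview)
Your argument is correct and reaches the goal, but the route differs from the paper's in two places worth noting.

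First, the paper does not bound $\operatorname{Var}Y_h$ by $\mathbb{E}[Y_h^2]$ directly. Instead it exploits translation-invariance of variance to center the sum at $g(X_0^x)=g(x)$, replacing $g(X_{t_i}^x)$ by $g_i:=g(X_{t_i}^x)-g(X_0^x)$; the Lipschitz bound then gives $|g_i|\leq L|X_{t_i}-X_0|_\gamma$, so the starting point $x$ disappears from the increment terms and only reappears through $\mathbb{E}[\phi(X_{T_t}^x)^2]$. Your choice to bound $|g(y)|\leq|g(0)|+L|y|_\gamma$ works too, but carries an extra $|x|_\gamma$-dependent constant through the estimate.

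Second, to control the square of the Riemann sum the paper expands $(\sum g_i)^2$ and bounds each cross term by Cauchy--Schwarz, $\mathbb{E}[g_ig_j\mid T_t]\leq dL^2M_2\,t_i^{\gamma/\beta}t_j^{\gamma/\beta}$, collapsing the double sum back to $\bigl(\sum t_i^{\gamma/\beta}\bigr)^2$; after multiplying by $h^2$ and comparing with $\int_0^{n+1}x^{\gamma/\beta}\,\md x$, this produces the $h$-free bound $\mathbb{E}[(T_t+1)^{2(1+\gamma/\beta)}]$. Your discrete Cauchy--Schwarz step $(h\sum b_i)^2\leq T_t\cdot h\sum b_i^2$ is a cleaner alternative that avoids the cross-term expansion altogether and lands on $\mathbb{E}[T_t^{2+2\gamma/\beta}]$ instead. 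Both final moments are finite by the same mechanism ($T_t\overset{d}{=}(\bar a/\tau_1)^\alpha$ and all negative moments of $\tau_1$ exist), so the two arguments are equivalent in strength; yours is arguably more transparent about why uniformity in $h$ holds.
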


\begin{prop}
There exists a constant $M_{t,x}^2$ (depending on $t,x$) such that $\mathbb E[|Z\left(t,x\right)-Y_h\left(t,x\right)|]\leqslant M_{t,x}^2h^{\frac{\gamma}{\beta}}$.
\end{prop}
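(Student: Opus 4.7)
The plan is to write $Z(t,x)-Y_h(t,x)$ as a sum of per-step discretisation errors plus a small leftover term, estimate each using the H\"older hypothesis on $g$ together with the self-similarity of the isotropic $\beta$-stable process $X^x$, and then average out over $T_t$ using the independence of $X^x$ from the time change $T_t$. Concretely, I would decompose
$$
Z(t,x)-Y_h(t,x)=\sum_{i=1}^{\lfloor T_t/h\rfloor}\int_{(i-1)h}^{ih}\bigl(g(X_s^x)-g(X_{(i-1)h}^x)\bigr)\md s+\int_{\lfloor T_t/h\rfloor h}^{T_t}g(X_s^x)\md s,
$$
take absolute values, condition on $T_t$, and apply Fubini inside the (now deterministic) finite sum.

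The interior sum is the heart of the argument. Applying the hypothesis $|g(y)-g(z)|\leqslant L\sum_{j=1}^d|y_{(j)}-z_{(j)}|^\gamma$ together with self-similarity $X^x_s-X^x_{(i-1)h}\overset{d}{=}(s-(i-1)h)^{1/\beta}X_1$, and setting $C_\gamma:=\mathbb E[|X_{1,(1)}|^\gamma]$, I expect
$$
\mathbb E\bigl[|g(X^x_s)-g(X^x_{(i-1)h})|\bigr]\leqslant LdC_\gamma(s-(i-1)h)^{\gamma/\beta}.
$$
Integrating $s\in[(i-1)h,ih]$, summing over $i\leqslant\lfloor T_t/h\rfloor\leqslant T_t/h$, and averaging in $T_t$ produces the main contribution $\frac{LdC_\gamma\,\mathbb E[T_t]}{1+\gamma/\beta}\,h^{\gamma/\beta}$. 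Finiteness of $C_\gamma$ is where the H\"older exponent must respect the tail behaviour of the stable law: since a $\beta$-stable variable has finite moments only of order strictly below $\beta$, the hypothesis $\gamma<\beta/2<\beta$ is more than enough. Finiteness of $\mathbb E[T_t]=\bar a^\alpha\mathbb E[\tau_1^{-\alpha}]$ follows from Lemma 2.3 and the identity $\mathbb E[\tau_1^{-2\alpha}]=2/\Gamma(1+2\alpha)$ already invoked in the proof of Proposition 2.2.

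For the boundary integral I would simply use $g\in C_\infty(\mathbb R^d)$, so that $\|g\|_\infty<\infty$ and
$$
\int_{\lfloor T_t/h\rfloor h}^{T_t}|g(X_s^x)|\md s\leqslant\|g\|_\infty h,
$$
which for $h\leqslant1$ is dominated by $h^{\gamma/\beta}$. Collecting the two estimates gives the proposition with $M_{t,x}^2=\frac{LdC_\gamma\mathbb E[T_t]}{1+\gamma/\beta}+\|g\|_\infty$. The only genuine subtlety is the step where the H\"older modulus interacts with the self-similar scaling of the stable increments; everything else is accounting. I expect the stricter restriction $\gamma<\beta/2$ (rather than the $\gamma<\beta$ that would suffice here) to be forced not by this proposition but by the variance bound in Proposition 3.2, whose $L^2$-type estimate requires moments of order $2\gamma$ of the stable law.
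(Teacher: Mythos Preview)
Your argument is correct and follows the same skeleton as the paper: the identical decomposition into per-step increments plus a leftover fractional step, the same use of the H\"older hypothesis combined with self-similarity $X^x_s-X^x_{(i-1)h}\overset{d}{=}(s-(i-1)h)^{1/\beta}(X_1-X_0)$ for the main sum, and then averaging over $T_t$. Your observation that only $\gamma<\beta$ is needed here, with the sharper constraint $\gamma<\beta/2$ being forced by the second-moment bound in Proposition~3.2, is also on the mark. One cosmetic point: for the finiteness of $\mathbb E[T_t]$ you want the $(-\alpha)$-moment of $\tau_1$, not the $(-2\alpha)$-moment you cite; the paper records $\mathbb E[T_t]=\bar a^{\alpha}/\Gamma(1+\alpha)$ in (3.18).

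The only substantive difference is your treatment of the leftover integral $\int_{\lfloor T_t/h\rfloor h}^{T_t}g(X^x_s)\,\md s$. You invoke $g\in C_\infty(\mathbb R^d)$ to get $\|g\|_\infty<\infty$ and bound the term by $\|g\|_\infty h$, which is cleaner. The paper instead writes $|g(X_s)|\leqslant |g(X_{T_t})|+|g(X_{T_t})-g(X_s)|$ and appeals to the tail estimate for $X_{T_t}$ from Section~2 to control $\mathbb E[|g(X_{T_t})|]$. Your route is shorter under the stated hypothesis; the paper's detour buys generality to unbounded $g$ of growth $O(|x|^{\beta/(1+\delta)})$, which is precisely what is needed for the numerical example $g(x)=|x|^{1/2}$ of Section~4.2 (a function not in $C_\infty(\mathbb R^d)$).
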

\begin{prop}
There exists a constant $M_{t,x}^3$ (depending on $t,x$) such that $\mathbb E[|Z\left(t,x\right)-Y_h\left(t,x\right)|^2]\leqslant M_{t,x}^3h^{\frac{2\gamma}{\beta}}$.
\end{prop}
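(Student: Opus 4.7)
The plan is to carry through the Cauchy--Schwarz-based decomposition one would use for Proposition~3.3, but now squared; the hypothesis $\gamma<\beta/2$ enters precisely to guarantee finiteness of the $2\gamma$-th moment of the components of $X_1$. Setting $n:=\lfloor T_t/h\rfloor$ and $\pi(s):=\lfloor s/h\rfloor h$, I would first write the discretisation error as
\begin{equation*}
Z(t,x)-Y_h(t,x) = \int_0^{nh}\bigl(g(X_s^x)-g(X_{\pi(s)}^x)\bigr)\,\md s + \int_{nh}^{T_t}g(X_s^x)\,\md s =: A+B ,
\end{equation*}
so that $\mathbb{E}|Z-Y_h|^2\le 2\,\mathbb{E}A^2+2\,\mathbb{E}B^2$, and bound $A$ and $B$ separately, using throughout the independence of the isotropic $\beta$-stable process $X^{x}$ from the time change $T_t$.

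For the main term $A$, a two-step Cauchy--Schwarz argument (first on the sum over the $n$ subintervals, each of length $h$, then on each integral) yields $A^{2} \le T_t\int_0^{T_t}(g(X_s^x)-g(X_{\pi(s)}^x))^2\,\md s$. The Lipschitz hypothesis on $g$ combined with the elementary inequality $|x|_\gamma^{2}=(\sum_{j}|x_{(j)}|^\gamma)^{2}\le d\sum_{j}|x_{(j)}|^{2\gamma}$ gives
\begin{equation*}
\bigl(g(X_s^x)-g(X_{\pi(s)}^x)\bigr)^{2} \le L^2 d\sum_{j=1}^{d}\bigl|X_{s,(j)}^{x}-X_{\pi(s),(j)}^{x}\bigr|^{2\gamma} ,
\end{equation*}
and the self-similarity $X_s-X_{\pi(s)}\overset{d}{=}(s-\pi(s))^{1/\beta}X_1$ of the $\beta$-stable process yields $\mathbb{E}|X_{s,(j)}^x-X_{\pi(s),(j)}^x|^{2\gamma}=(s-\pi(s))^{2\gamma/\beta}\mathbb{E}|X_{1,(j)}|^{2\gamma}$, which is finite precisely because $2\gamma<\beta$. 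Combining this with $s-\pi(s)\le h$, independence, and the moment bound $\mathbb{E}T_t^{2}=\bar a^{2\alpha}\mathbb{E}[\tau_1^{-2\alpha}]=2\bar a^{2\alpha}/\Gamma(1+2\alpha)$ from Lemma~2.3 and (2.12) should give $\mathbb{E}A^2\le C_1(t,x)\,h^{2\gamma/\beta}$.

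For the boundary term $B$, Cauchy--Schwarz again gives $B^{2}\le (T_t-nh)\int_{nh}^{T_t}g(X_s^x)^{2}\,\md s\le h\int_0^{T_t}g(X_s^x)^{2}\,\md s$; bounding $|g(X_s^x)|\le|g(0)|+L|X_s^x|_\gamma$, using $X_s^x\overset{d}{=}x+s^{1/\beta}X_1$, and averaging against the law of $T_t$ with the help of (2.12) and (3.7) for the negative moments of $\tau_1$ should yield $\mathbb{E}B^{2}\le C_2(t,x)\,h^{2}$. Since $2\gamma/\beta<1<2$, the $h^{2}$ contribution is dominated by $h^{2\gamma/\beta}$ for small $h$, and the claimed bound follows. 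The main technical nuisance is bookkeeping: checking that every moment of $X_1$ and every negative moment of $\tau_1$ invoked along the way is covered by the standing assumptions ($\gamma<\beta/2$ and $0<\alpha<1$), which in particular forces the range $2\gamma<\beta$ in the self-similarity step to be strict.
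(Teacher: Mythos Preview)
Your proposal is correct and follows essentially the same route as the paper's own argument (given in Remark~3.7.2): the same $A+B$ splitting, Cauchy--Schwarz first on the sum over subintervals and then on each integral, the H\"older bound $|y|_\gamma^{2}\le d\sum_j |y_{(j)}|^{2\gamma}$, and self-similarity of the $\beta$-stable increments to land on $\mathbb{E}[T_t^2]\cdot h^{2\gamma/\beta}$ for the main term. The only slip is in the boundary term: once you enlarge the domain to $[0,T_t]$ your displayed bound yields $\mathbb{E}B^2=O(h)$, not $O(h^2)$ (the paper keeps the integral over $[nh,T_t]$ and compares $g(X_s)$ to $g(X_{T_t})$ rather than $g(0)$ to retain the second factor of $h$); since $2\gamma/\beta<1$, this is harmless for the stated conclusion.
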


Sections 3.1 and 3.2 give proofs of these propositions. Section 3.3 is the proof of our CLT.
\begin{rem}
\begin{itemize}
	\item
	Non-asymptotic confidence interval:
Combining (3.1), proposition 3.2 and proposition 3.3 we have
\begin{equation}
\mathbb E[\left(u\left(t,x\right)-u_N^h\left(t,x\right)\right)^2]\leqslant \frac{1}{N}M_{t,x}^1+\left(M_{t,x}^2\right)^2h^{\frac{2\gamma}{\beta}}
\end{equation}
where $u\left(t,x\right)$ is the solution of problem (1.1). Now we can construct the confidence interval using Markov inequality:
\begin{equation}
\mathbb P[|u\left(t,x\right)-u_N^h\left(t,x\right)|>r]\leqslant \mathbb E[\left(u\left(t,x\right)-u_N^h\left(t,x\right)\right)^2]/r^2\leqslant \frac{1}{r^2}\left(\frac{1}{N}M_{t,x}^1+\left(M_{t,x}^2\right)^2h^{\frac{2\gamma}{\beta}}\right)
\end{equation}
Hence we can pick suitable $N$ and $h$ such that $\mathbb P[|u\left(t,x\right)-u_N^h\left(t,x\right)|>r]<1-\epsilon$ for some small $\epsilon$.
\item
Asymptotic confidence interval: We can use CLT in theorem 3.1. to get the asymptotic confidence interval. In other words, the central limit theorem can be written using convergence in distribution:
\begin{equation}
\sqrt N\left(u_N^{h_N}\left(t,x\right)-u\left(t,x\right)\right)\overset{d}\to N\left(0,VarZ\left(t,x\right)\right)\quad\text{as }n\to\infty.
\end{equation}
Once we have the upper bound $M\left(t,x\right)$ of $\sqrt{VarZ\left(t,x\right)}$(e.g. see Example 2.7.), it is easy to see that it yields a $100\left(1-\alpha\right)\%$ asymptomic confidence interval $u_N^{h_N}\pm\frac{M\left(t,x\right)}{\sqrt N}z\left(\alpha/2\right)$ for $u\left(t,x\right)$, where $z\left(t\right)$ satisfies $\Phi\left(z\left(t\right)\right)=1-t$ and $\Phi$ is the distribution function of the standard normal distribution. See section 4.3 for a simple example.
\end{itemize}
\end{rem}

Before the calculation, we need the following results (see \cite{ken1999levy}. page 162) :\\
(1) For constants $c>0$, $\eta \in (-1,\beta)$ and a symmetric $\beta$-stable $1$-dim process $U_t$ with $\mathbb E[e^{izU_t}]=e^{-tc|z|^\beta}$, we have
\begin{equation}
\mathbb E[|U_t|^\eta]=\left(tc\right)^{\eta/\beta}\frac{2^\eta\Gamma\left(\frac{1+\eta}{2}\right)
\Gamma\left(1-\frac{\eta}{\beta}\right)}{\sqrt{\pi}\Gamma\left(1-\frac{\eta}{2}\right)}.
\end{equation}
Recall that each component of $X_1-X_0$, denoted by $X_{(j)}$, is symmetric with $\mathbb E[e^{izX_{(j)}}]=e^{-c|z|^\beta}$ and $c>0$.\\
(2) If $0<\alpha<1$ and $\{X_t\}$ is a stable subordinator with $\mathbb E[e^{-uX_t}]=e^{-tc'u^\alpha}$, where $c'$ is some constant, then for $-\infty<\eta<\alpha$,
\begin{equation}
\mathbb E[X_t^\eta]=\left(tc'\right)^{\eta/\alpha}\frac{\Gamma\left(1-\frac{\eta}{\alpha}\right)}{\Gamma\left(1-\eta\right)}.
\end{equation}
Since we have $\mathbb E[e^{-u\tau_1}]=\me^{-u^\alpha}$(see \cite{ken1999levy}, Example 24.12),
\begin{equation}
	\mathbb E[\tau_1^\eta]=\frac{\Gamma\left(1-\frac{\eta}{\alpha}\right)}{\Gamma\left(1-\eta\right)}.
\end{equation}
\\
\subsection{Estimation of $VarY_h$}
In this section we estimate $Var(Y_h)$. \\

Denote $\lfloor T_t/h\rfloor$ by $n$. Note that the variance does not change when added some constant, and denote $g\left(X_{t_i}\right)-g\left(X_0\right)=g_i$. We have
\begin{equation}
\begin{aligned}
Var Y_h(t,x)&=Var\left(\phi\left(X_{T_t}^x\right)+h\sum_{i=1}^ng\left(X_{t_i}^x\right)\right)\\
&=Var\left(\phi\left(X_{T_t}^x\right)+h\sum_{i=1}^n\left(g\left(X_{t_i}^x\right)-g\left(X_0\right)\right)\right)\\
&\leqslant\mathbb E\left(\phi\left(X_{T_t}^x\right)+h\sum_{i=1}^ng_i\right)^2\\
&\leqslant \mathbb E[\phi\left(X_{T_t}^x\right)^2]+h^2\mathbb E[\left(\sum_{i=1}^ng_i\right)^2]+2h\mathbb E[\phi\left(X_{T_t}^x\right)\sum_{i=1}^ng_i]
\end{aligned}
\end{equation}
Denote the upper bound of $\mathbb E\left[\phi\left(X_{T_t}\right)^2\right]$ by $M_1$. Next
\begin{equation}
\begin{aligned}
\mathbb E\left[g_i^2|T_t\right]&\leqslant L^2\mathbb E[|X_{t_i}-X_0|_\gamma^2|T_t]\\
&=L^2\mathbb E[\left(\sum_{j=1}^d|X_{t_i,(j)}-X_{0,(j)}|^\gamma\right)^2|T_t]\leqslant dL^2\mathbb E[\sum_{j=1}^d|X_{t_i,(j)}-X_{0,(j)}|^{2\gamma}|T_t]\\
&=dL^2t_i^{\frac{2\gamma}{\beta}}\mathbb E[\sum_{j=1}^d|X_{1,(j)}-X_{0,(j)}|^{2\gamma}].
\end{aligned}
\end{equation}

Use the result of (3.5), for $j=1,...,d,$
\begin{equation}
\mathbb E[|X_{1,(j)}-X_{0,(j)}|^{2\gamma}]=c^{2\gamma/\beta}\frac{2^{2\gamma}
\Gamma\left(\frac{1+2\gamma}{2}\right)\Gamma\left(1-\frac{2\gamma}{\beta}\right)}{\sqrt{\pi}\Gamma\left(1-\gamma\right)}.
\end{equation}
Let us denote
\begin{equation}
M_2:=\sum_{j=1}^d\mathbb E[|X_{1,(j)}-X_{0,(j)}|^{2\gamma}]
=dc^{2\gamma/\beta}\frac{2^{2\gamma}\Gamma\left(\frac{1+2\gamma}{2}\right)
\Gamma\left(1-\frac{2\gamma}{\beta}\right)}{\sqrt{\pi}\Gamma\left(1-\gamma\right)}.
\end{equation}
Then (3.9) becomes
\begin{equation}
\mathbb E[g_i^2|T_t]\leqslant dL^2t_i^{\frac{2\gamma}{\beta}}M_2,
\end{equation}
and for $i\neq j$,
\begin{equation}
\mathbb E\left[g_ig_j|T_t\right]\leqslant \left(\mathbb E[g_i^2|T_t]\mathbb E[g_j^2]|T_t\right)^{\frac{1}{2}}\leqslant dL^2t_i^{\frac{\gamma}{\beta}}t_j^{\frac{\gamma}{\beta}}M_2.
\end{equation}
Therefore,
\begin{equation}
\mathbb E[\left(\sum_{i=1}^ng_i\right)^2|T_t]\leqslant \sum_{i=1}^ndL^2t_i^{\frac{2\gamma}{\beta}}M_2+\sum_{i\neq j}2dL^2t_i^{\frac{\gamma}{\beta}}t_j^{\frac{\gamma}{\beta}}M_2
=dL^2M_2\left(\sum_{i=1}^nt_i^{\frac{\gamma}{\beta}}\right)^2.
\end{equation}
Recall that $t_i=ih$ and $n=\lfloor T_t/h\rfloor$, hence
\begin{equation}
\sum_{i=1}^nt_i^{\frac{\gamma}{\beta}}=h^{\frac{\gamma}{\beta}}\sum_{i=1}^ni^{\frac{\gamma}{\beta}}\leqslant h^{\frac{\gamma}{\beta}}\int_0^{n+1}x^{\frac{\gamma}{\beta}}dx=\frac{1}{1+\frac{\gamma}{\beta}}h^{\frac{\gamma}{\beta}}\left(n+1\right)^{1+\frac{\gamma}{\beta}}\leqslant \frac{1}{1+\frac{\gamma}{\beta}}h^{\frac{\gamma}{\beta}}\left(T_t/h+1\right)^{1+\frac{\gamma}{\beta}}.
\end{equation}
Hence
\begin{equation}
h^2\mathbb E[\left(\sum_{i=1}^ng_i\right)^2]=h^2\mathbb E[\mathbb E[\left(\sum_{i=1}^ng_i^2\right)|T_t]]\leqslant \frac{dL^2M_2}{\left(1+\frac{\alpha}{\beta}\right)^2}\mathbb E[\left(T_t+1\right)^{2\left(1+\frac{\gamma}{\beta}\right)}].
\end{equation}
Note that
\begin{equation}
\mathbb E[\left(T_t+1\right)^{2\left(1+\frac{\gamma}{\beta}\right)}]\leqslant\mathbb E[\left(T_t+1\right)^3]=\mathbb E[T_t^3]+3\mathbb E[T_t^2]+3\mathbb E[T_t]+1.
\end{equation}
And from (3.7), we know that for $k=1,2,3$
\begin{equation}
\mathbb E[T_t^k]=\bar a^{k\alpha}\mathbb E[\tau_1^{-k\alpha}]=\bar a^{k\alpha}\frac{\Gamma\left(1+k\right)}{\Gamma\left(1+k\alpha\right)}
\end{equation}
implying the upper bound of $h^2\mathbb E[\left(\sum_{i=1}^ng_i\right)^2]$. By Cauchy-Schwarz inequality,
\begin{equation}
h\mathbb E[\phi\left(X_{T_t}\right)\sum_{i=1}^ng_i]\leqslant \left(h^2\mathbb E[\left(\sum_{i=1}^ng_i\right)^2]\mathbb E[\left(\phi\left(X_{T_t}\right)\right)^2]\right)^{\frac{1}{2}}
\end{equation}
and hence we get the upper bound of $VarY_h(t,x)$ using (3.8):
\begin{equation}
VarY_h(t,x)\leqslant \mathbb E[\phi\left(X_{T_t}\right)^2]+\frac{dL^2M_2}{\left(1+\frac{\alpha}{\beta}\right)^2}\mathbb E[\left(T_t+1\right)^{2\left(1+\frac{\gamma}{\beta}\right)}]+\left(\mathbb E[\phi\left(X_{T_t}\right)^2]\frac{dL^2M_2}{\left(1+\frac{\alpha}{\beta}\right)^2}\mathbb E[\left(T_t+1\right)^{2\left(1+\frac{\gamma}{\beta}\right)}]\right)^{\frac{1}{2}}
\end{equation}
\begin{rem}
	 Using Example 2.7, we know the upper bound of $\mathbb E[\phi\left(X_{T_t}\right)^2]$ has order $d^{1+\frac{\beta}{2}}$. By (3.11), $M_2$ has order $d$. By (3.16), the upper bound of $h^2\mathbb E[\left(\sum_{i=1}^ng_i\right)^2]$ has order $d^2$. Hence the upper bound of $varY$  has order $d^2$.
\end{rem}
\subsection{Estimation of  EZ-EY}
Similarly, we begin with the estimation the conditional expectation.  \\

Conditioning on $T_t$ we write
\begin{equation}
\begin{aligned}
\mathbb E[Z(t,x)]-\mathbb E[Y_h(t,x)|T_t]=
&\mathbb E\left[\left(\phi\left(X_{T_t}\right)+\int_0^{T_t}g\left(X_s\right)ds-\left(\phi\left(X_{T_t}\right)+\sum_{i=1}^{\lfloor T_t/h\rfloor}hg\left(X_{t_i}\right)\right)\right)|T_t\right]\\
=&\mathbb E\left[\int_0^{T_t}g\left(X_s\right)ds-\sum_{i=1}^{\lfloor T_t/h\rfloor}hg\left(X_{t_i}\right)|T_t\right]\\
=&\mathbb E\left[\left(\sum_{i=1}^{\lfloor T_t/h\rfloor}\int_{t_i}^{t_i+h}\left(g\left(X_s\right)-g\left(X_{t_i}\right)\right)ds+\int_{\lfloor T_t/h\rfloor h}^{T_t}g\left(X_s\right)ds\right)|T_t\right].
\end{aligned}
\end{equation}
We have
\begin{equation}
\begin{aligned}
\mathbb E[|\sum_{i=1}^{\lfloor T_t/h\rfloor}\int_{t_i}^{t_i+h}\left(g\left(X_s\right)-g\left(X_{t_i}\right)\right)ds|\big|T_t]
&\leqslant
\mathbb E[\sum_{i=1}^{\lfloor T_t/h\rfloor}\int_{t_i}^{t_i+h}L|X_s-X_{t_i}|_\gamma ds\big|T_t]\\
\left(\text{ stationarity of increments}\right)&=
\mathbb E[\sum_{i=1}^{\lfloor T_t/h\rfloor}\int_0^hL|X_s-X_0|_\gamma ds\big|T_t]\\
&=
\mathbb E[\sum_{i=1}^{\lfloor T_t/h\rfloor}\int_0^hL\sum_{j=1}^d|X_{s,(j)}-X_{0,(j)}|^\gamma ds\big|T_t]\\
\left(\text{$X_t$ is $\beta$-stable}\right)&=\mathbb E[\lfloor T_t/h\rfloor L\int_0^h s^{\frac{\gamma}{\beta}}\sum_{j=1}^d|X_{1,(j)}-X_{0,(j)}|^\gamma ds|T_t]\\
&=C_0\lfloor T_t/h\rfloor h^{1+\frac{\gamma}{\beta}}\leqslant C_0T_th^{\frac{\gamma}{\beta}}
\end{aligned}
\end{equation}
where $C_0=\frac{1}{1+\frac{\gamma}{\beta}}L\sum_{j=1}^d\mathbb E|X_{1,(j)}-X_{0,(j)}|^\gamma$.\\
From (3.5), we know that
\begin{equation}
\mathbb E[|X_{1,(j)}-X_{0,(j)}|^\gamma]=c^{\frac{\gamma}{\beta}}\frac{2^\gamma\Gamma\left(\frac{1+\gamma}{2}\right)\Gamma\left(1-\frac{\gamma}{\beta}\right)}{\sqrt{\pi}\Gamma\left(1-\frac{\gamma}{2}\right)}.
\end{equation}
\\

Next
\begin{equation}
g\left(X_s\right)\leqslant g\left(X_{T_t}\right)+|g\left(X_{T_t}\right)-g\left(X_s\right)|,
\end{equation}
Thus
\begin{equation}
\begin{aligned}
\mathbb E g\left(X_s\right)&\leqslant \mathbb E\left(g\left(X_{T_t}\right)+|g\left(X_{T_t}\right)-g\left(X_s\right)|\right)\\
&\leqslant\mathbb E[g\left(X_{T_t}\right)+L|X_{T_t}-X_s|_\gamma]\\
&=\mathbb E[g\left(X_{T_t}\right)+L\left(T_t-s\right)^{\frac{\gamma}{\beta}}|X_1-X_0|],
\end{aligned}
\end{equation}
and
\begin{equation}
\begin{aligned}
\mathbb E[\int_{\lfloor T_t/h\rfloor h}^{T_t} g\left(X_s\right)ds|T_t]&\leqslant
\mathbb E[\int_{\lfloor T_t/h\rfloor h}^{T_t}\mathbb E[g\left(X_{T_t}\right)+\mathbb EL\left(T_t-s\right)^{\frac{\gamma}{\beta}}|X_1-X_0|_\gamma]ds|T_t]\\
&\leqslant
\mathbb E[h\mathbb E[g\left(X_{T_t}\right)+\frac{1}{1+\frac{\gamma}{\beta}}Lh^{1+\frac{\gamma}{\beta}}\mathbb E|X_1-X_0|_\gamma]|T_t].
\end{aligned}
\end{equation}
We have showed that if $g\left(x\right)=O\left(x^{\frac{\beta}{1+\delta}}\right)$, where $\delta>0$, then
$$
\mathbb E[g\left(X_{T_t}\right)]<M_3 <\infty.
$$
with some $M_3$.\\
Therefore, by (3.26) 
\begin{equation}
\mathbb E[\mathbb E[\int_{\lfloor T_t/h\rfloor h}^{T_t} g\left(X_s\right)ds|T_t]]\leqslant
M_2h+\frac{1}{1+\frac{\gamma}{\beta}}L\mathbb E|X_1-X_0|_\gamma h^{1+\frac{\gamma}{\beta}}.
\end{equation}
\\

Combining (3.21), (3.22) and (3.27) we have
\begin{equation}
\begin{aligned}
&|\mathbb E[Z-Y]|=\mathbb E[\mathbb E[Z-Y|T_t]]\\
\leqslant&
M_3h+Ldc^{\frac{\gamma}{\beta}}\frac{2^\gamma\Gamma\left(\frac{1+\gamma}{2}\right)\Gamma\left(1-\frac{\gamma}{\beta}\right)}{\left(1+\frac{\gamma}{\beta}\right)\sqrt{\pi}\Gamma\left(1-\frac{\gamma}{2}\right)}h^{1+\frac{\gamma}{\beta}}+Ldc^{\frac{\gamma}{\beta}}\frac{2^\gamma\Gamma\left(\frac{1+\gamma}{2}\right)\Gamma\left(1-\frac{\gamma}{\beta}\right)}{\left(1+\frac{\gamma}{\beta}\right)\sqrt{\pi}\Gamma\left(1-\frac{\gamma}{2}\right)}h^{\frac{\gamma}{\beta}}\bar a^\alpha\frac{\Gamma\left(2\right)}{\Gamma\left(1+\alpha\right)}\\
=&O\left(h^{\frac{\gamma}{\beta}}\right).
\end{aligned}
\end{equation}
\begin{rem}
\begin{enumerate}[1.]
	\item With similar argument, we can show that $\mathbb E[Z^2]<\infty$:\\ Since we have $\mathbb E[\phi\left(X_{T_t}^x\right)^2]<\infty$, we only need to prove $\mathbb E[\left(\int_0^{T_t}g\left(X_s\right)ds\right)^2]<\infty$. Like (3.21), we have
	\begin{equation}
	\begin{aligned}
	\mathbb E[\left(\int_0^{T_t}g\left(X_s\right)ds\right)^2]&\leqslant\mathbb E[\left(\int_0^{T_t}|g\left(X_s\right)|ds\right)^2]\\
	&\leqslant\mathbb E[\left(\int_0^{T_t}|g\left(X_0\right)|+|g\left(X_0\right)-g\left(X_s\right)|ds\right)^2]\\
	&=\mathbb E[\mathbb E[\left(\int_0^{T_t}|g\left(X_0\right)|+|g\left(X_0\right)-g\left(X_s\right)|ds\right)^2|T_t]]\\
	&\leqslant 2\mathbb E[\mathbb E[\left(\int_0^{T_t}|g\left(X_0\right)|ds\right)^2|T_t]+\mathbb E[\left(\int_0^{T_t}|g\left(X_0\right)-g\left(X_s\right)|ds\right)^2|T_t]].
	\end{aligned}
	\end{equation}
	Note that
	\begin{equation}
	\mathbb E[\mathbb E[\left(\int_0^{T_t}|g\left(X_0\right)|ds\right)^2|T_t]]=\mathbb E[T_t^2]|g\left(X_0\right)|^2<\infty,
	\end{equation}
	and
	\begin{equation}
	\begin{aligned}
		\mathbb E[\left(\int_0^{T_t}|g\left(X_0\right)-g\left(X_s\right)|ds\right)^2|T_t]&\leqslant \mathbb E[\left(\int_0^{T_t}L|X_0-X_s|_\gamma ds\right)^2|T_t]\\
		&=L^2\mathbb E[\frac{1}{T_t}\int_0^{T_t}|X_0-X_s|_\gamma^2ds|T_t]\\
		&=L^2\frac{1}{T_t}\mathbb E[\int_0^{T_t}\left(\sum_{j=1}^d|X_{0,(j)}-X_{s,(j)}|^\gamma\right)^2ds|T_t]\\
		&\leqslant L^2d\frac{1}{T_t}\mathbb E[\int_0^{T_t}\sum_{j=1}^d|X_{0,(j)}-X_{s,(j)}|^{2\gamma}ds|T_t]\\
		&\leqslant L^2d\frac{1}{T_t}\mathbb E[\int_0^{T_t}\sum_{j=1}^ds^{\frac{2\gamma}{\beta}}|X_{0,(j)}-X_{1,(j)}|^{2\gamma}ds|T_t]\\
		&=L^2d\frac{1}{T_t}\frac{1}{1+\frac{2\gamma}{\beta}}T_t^{1+\frac{2\gamma}{\beta}}\mathbb E[\sum_{j=1}^d|X_{0,(j)}-X_{1,(j)}|^{2\gamma}].
		\end{aligned}
	\end{equation}
	Hence
	\begin{equation}
	\mathbb E[\mathbb E[\left(\int_0^{T_t}|g\left(X_0\right)-g\left(X_s\right)|ds\right)^2|T_t]]\leqslant L^2d\frac{1}{1+\frac{2\gamma}{\beta}}\mathbb E[T_t^{\frac{2\gamma}{\beta}}]\mathbb E[\sum_{i=1}^d|X_{0,(j)}-X_{1,(j)}|^{2\gamma}]<\infty.
	\end{equation}
	Combining (3.30) and (3.32) we have
	\begin{equation}
	\mathbb E[\left(\int_0^{T_t}g\left(X_s\right)ds\right)^2]<\infty,
	\end{equation}
	and therefore $\mathbb E[Z^2]<\infty$.
	\item With the same condition, we can also show that $\mathbb E[|Y-Z|^2]$ has order $h^{\frac{2\gamma}{\beta}}$ using similar argument:
	\begin{equation}
	\mathbb E[|Y-Z|^2]=\mathbb E[\left(\sum_{i=1}^{\lfloor T_t/h\rfloor}\int_{\left(i-1\right)h}^{ih}(g\left(X_{s}\right)-g\left(X_{\left(i-1\right)h}\right))ds+\int_{\lfloor T_t/h\rfloor h}^{T_t}g\left(X_s\right)ds\right)^2].
	\end{equation}
And
	\begin{equation}
	\begin{aligned}
	\mathbb E[|\int_{\lfloor T_t/h\rfloor}^{T_t}g\left(X_s\right)ds|^2|T_t]&\leqslant\mathbb E[\left(T_t-\lfloor T_t/h\rfloor h\right)\int_{\lfloor T_t/h\rfloor h}^{T_t}g\left(X_s\right)^2ds]\\
	&\leqslant\mathbb E[h\int_{\lfloor T_t/h\rfloor h}^{T_t}\left(g\left(X_{T_t}\right)+|g\left(X_{T_t}\right)-g\left(X_s\right)|\right)^2ds]\\
	&\leqslant \mathbb E[2h\int_{\lfloor T_t/h\rfloor}^{T_t}g\left(X_{T_t}\right)^2]+2h\mathbb E[\int_{\lfloor T_t/h\rfloor h}^{T_t}\left(g\left(X_{T_t}\right)-g\left(X_s\right)\right)^2ds],
	\end{aligned}
	\end{equation}
	\begin{equation}
	\begin{aligned}
	\mathbb E[\int_{\lfloor T_t/h\rfloor h}^{T_t}\left(g\left(X_{T_t}\right)-g\left(X_s\right)\right)^2ds|T_t]&\leqslant L^2\mathbb E[\int_{\lfloor T_t/h\rfloor h}^{T_t}|X_{T_t}-X_s|_\gamma^2ds|T_t]\\
	&\leqslant L^2d\mathbb E[\int_{\lfloor T_t/h\rfloor h}^{T_t}\left(\sum_{j=1}^d|X_{T_t,(j)}-X_{s,(j)}|^{2\gamma}\right)ds|T_t]\\
	&=L^2d\mathbb E[\int_{\lfloor T_t/h\rfloor h}^{T_t}\left(T_t-s\right)^{\frac{2\gamma}{\beta}}\sum_{j=1}^d|X_{1,(j)}-X_{0,(j)}|^{2\gamma}ds|T_t]\\
	&\leqslant L^2d\mathbb E[\sum_{j=1}^d|X_{1,(j)}-X_{0,(j)}|^{2\gamma}\frac{1}{\frac{2\gamma}{\beta}+1}h^{1+\frac{2\gamma}{\beta}},
	\end{aligned}
	\end{equation}
	\begin{equation}
	\begin{aligned}
	&\mathbb E[\left(\sum_{i=1}^{\lfloor T_t/h\rfloor}\int_{\left(i-1\right)h}^{ih}g\left(X_{s}\right)-g\left(X_{\left(i-1\right)h}\right)ds\right)^2|T_t]\\
	\leqslant&\mathbb E[\lfloor T_t/h\rfloor\sum_{i=1}^{\lfloor T_t/h\rfloor}\left(\int_{\left(i-1\right)h}^{ih}g\left(X_s\right)-g\left(X_{\left(i-1\right)h}\right)ds\right)^2|T_t]\\
	\leqslant&\lfloor T_t/h\rfloor L^2\sum_{i=1}^{\lfloor T_t/h\rfloor}\mathbb E[\left(\int_{\left(i-1\right)h}^{ih}|X_s-X_{\left(i-1\right)h}|_\gamma ds\right)^2|T_t]\\
	\leqslant&\lfloor T_t/h\rfloor L^2\sum_{i=1}^{\lfloor T_t/h\rfloor}\mathbb E[h\int_{\left(i-1\right)h}^{ih}|X_s-X_{\left(i-1\right)h}|_\gamma^2 ds|T_t]\\
	=&\lfloor T_t/h\rfloor L^2\sum_{i=1}^{\lfloor T_t/h\rfloor}\mathbb E[h\int_{\left(i-1\right)h}^{ih}\left(\sum_{j=1}^d|X_{s,(j)}-X_{\left(i-1\right)h,(j)}|^\gamma\right)^2 ds|T_t]\\
	\leqslant&\lfloor T_t/h\rfloor L^2\sum_{i=1}^{\lfloor T_t/h\rfloor}\mathbb E[h\int_{\left(i-1\right)h}^{ih}d\sum_{j=1}^d|X_{s,(j)}-X_{\left(i-1\right)h,(j)}|^{2\gamma} ds|T_t]\\
	=&\lfloor T_t/h\rfloor^2 L^2\mathbb E[h\int_0^hs^{\frac{2\gamma}{\beta}}\sum_{j=1}^d|X_{1,(j)}-X_{0,(j)}|^{2\gamma} ds|T_t]\\
	\leqslant&T_t^2L^2\mathbb E[\sum_{j=1}^d|X_{1,(j)}-X_{0,(j)}|^{2\gamma}]h^{\frac{2\gamma}{\beta}}.
	\end{aligned}
	\end{equation}
Hence $ \mathbb E[|Y-Z|^2]\leqslant M_{t,x}^3h^{\frac{2\gamma}{\beta}}$ where $M_{t,x}^3$ is a constant that only depends on $t$ and $x$.
	
\end{enumerate}
\end{rem}
\subsection{Proof of Central Limit Theorem with a bias correction}
Recall the definition of \textbf{null array}. By this we mean a triangular array of random variables $\left(\xi_{nj}\right), 1\leqslant j\leqslant m_n, n, m_n\in\mathbb N$, such that the  $\xi_{nj}$ are independent for each $n$ and satisfy
\begin{equation}
\sup_j\mathbb E[|\xi_{nj}|\land 1]\to 0.
\end{equation}
The following result is well-known (see \cite{kallenberg2002foundations} theorem 5.15).
\begin{thm}
Let $\left(\xi_{nj}\right)$ be a null array of random variables, then $\sum_{j=1}^{m_n}\xi_{nj}\overset{d}\to N\left(b,c\right)$ iff these conditions hold:
\begin{enumerate}[(i)]
	\item $\sum_{j=1}^{m_n}\mathbb P[|\xi_{nj}|>\epsilon]\to0$ for all $\epsilon>0$ as $n\to\infty$;
	\item $\sum_{j=1}^{m_n}\mathbb E[\xi_{nj};|\xi_{nj}|\leqslant 1]\to b$ as $n\to\infty$, where $\mathbb E[X;A]=\mathbb E[X\mathbb I_A]$,
	\item $\sum_{j=1}^{m_n}Var[\xi_{nj};|\xi_{nj}|\leqslant 1]\to c$ as $n\to\infty$, where $Var\left(X;A\right):=Var\left(X\mathbb I_A\right)$.
\end{enumerate}
\end{thm}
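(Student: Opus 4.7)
The plan is to prove this classical CLT for null arrays by characteristic-function analysis, establishing sufficiency and necessity separately.

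For sufficiency, I would first truncate: define $\tilde\xi_{nj}=\xi_{nj}\mathbb{I}_{|\xi_{nj}|\leqslant 1}$. By condition (i),
$$
\mathbb P\Bigl[\sum_{j=1}^{m_n}\xi_{nj}\neq\sum_{j=1}^{m_n}\tilde\xi_{nj}\Bigr]\leqslant\sum_{j=1}^{m_n}\mathbb P[|\xi_{nj}|>1]\to 0,
$$
so the two sums share the same weak limit and it suffices to treat the bounded array $\tilde\xi_{nj}$. Write $b_{nj}=\mathbb E[\tilde\xi_{nj}]$ and $\sigma_{nj}^2=\mathrm{Var}(\tilde\xi_{nj})$, so that (ii) and (iii) give $\sum_j b_{nj}\to b$ and $\sum_j\sigma_{nj}^2\to c$. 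On the characteristic-function side, the bound $|e^{iux}-1-iux+u^2x^2/2|\leqslant|ux|^3/6$ together with $|\tilde\xi_{nj}|\leqslant 1$ yields
$$
\mathbb E[e^{iu\tilde\xi_{nj}}]-1=iub_{nj}-\tfrac{u^2}{2}(b_{nj}^2+\sigma_{nj}^2)+O\bigl(|u|^3\,\mathbb E[\tilde\xi_{nj}^2]\bigr).
$$
The null-array property combined with (iii) forces $\max_j\mathbb E[\tilde\xi_{nj}^2]\to 0$, so summing the remainder produces a vanishing error. Using $\log(1+z)=z+O(z^2)$, justified because $\max_j|\mathbb E[e^{iu\tilde\xi_{nj}}]-1|\to 0$, I conclude $\log\prod_j\mathbb E[e^{iu\tilde\xi_{nj}}]\to iub-cu^2/2$, which is the log-characteristic function of $N(b,c)$.

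For necessity, assume $S_n:=\sum_j\xi_{nj}\overset{d}\to N(b,c)$. The crucial step is deriving (i); the other two conditions then follow by reversing the sufficiency computation on the truncated array. To get (i), I would symmetrize, replacing $\xi_{nj}$ by $\eta_{nj}=\xi_{nj}-\xi_{nj}'$ with an independent copy, so that the symmetric sum converges to $N(0,2c)$. Applying a standard bound that controls $\mathbb P[|\eta_{nj}|>\epsilon]$ by integrals of $1-\mathrm{Re}\,\mathbb E[e^{iu\eta_{nj}}]$ over a bounded $u$-interval, together with the product identity $\mathbb E[e^{iuS_n^{\mathrm{sym}}}]=\prod_j\mathbb E[e^{iu\eta_{nj}}]$ and a logarithm of the Gaussian limit characteristic function, one extracts uniform smallness of $\sum_j\mathbb P[|\eta_{nj}|>\epsilon]$; a Lévy-type desymmetrization inequality then transfers the tail bounds from $\eta_{nj}$ to $\xi_{nj}$ minus their medians, and the null-array condition forces the medians to $0$, giving (i). With (i) in hand, the truncated sum $\sum_j\tilde\xi_{nj}$ still converges to $N(b,c)$, and matching the imaginary and real parts of the logarithm of the characteristic function using the same Taylor expansion as in the sufficiency direction pins down $\sum_j b_{nj}\to b$ and $\sum_j\sigma_{nj}^2\to c$.

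The main obstacle will be the derivation of (i) from the Gaussian weak limit: one must combine symmetrization with characteristic-function tail bounds to convert smooth convergence of the product characteristic function into pointwise tail control on each factor, since the null-array hypothesis alone is far too weak. The sufficiency direction is essentially routine Taylor expansion once truncation is in place, and the recovery of (ii) and (iii) in the necessity direction reduces to the same Taylor computation run in reverse.
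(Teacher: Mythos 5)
The paper does not prove this theorem; it invokes it by reference (Kallenberg, \emph{Foundations of Modern Probability}, Theorem 5.15). Kallenberg's route is through the general theory of convergence to infinitely divisible laws, via a compound Poisson approximation of the row sums and the L\'evy--Khintchine characterisation. You propose instead a direct characteristic-function expansion. That is a legitimate classical alternative (Gnedenko--Kolmogorov, Petrov, Billingsley), so the comparison is fair; however, your sufficiency argument as written has a genuine gap.

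The issue is the step ``Using $\log(1+z)=z+O(z^2)$, justified because $\max_j|\mathbb E[e^{iu\tilde\xi_{nj}}]-1|\to 0$''. Writing $z_{nj}=\mathbb E[e^{iu\tilde\xi_{nj}}]-1$, the passage from $\sum_j\log(1+z_{nj})$ to $\sum_jz_{nj}$ requires $\sum_j|z_{nj}|^2\to 0$, not merely $\max_j|z_{nj}|\to 0$. Since $z_{nj}=iub_{nj}+O(u^2\mathbb E[\tilde\xi_{nj}^2])$, controlling $\sum_j|z_{nj}|^2$ forces one to control $\sum_jb_{nj}^2$, and this is \emph{not} implied by conditions (ii) and (iii). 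A concrete obstruction: take $m_n=n$, $\xi_{nj}=(-1)^jn^{-1/4}$ deterministic. This is a null array, $\sum_jb_{nj}\to 0$, $\sum_j\sigma_{nj}^2=0$, condition (i) holds trivially, and $\sum_j\xi_{nj}\to 0=N(0,0)$, so the theorem's hypotheses and conclusion hold with $b=c=0$. But $\sum_jb_{nj}^2=n^{1/2}\to\infty$ and $\sum_j|z_{nj}|^2\sim u^2n^{1/2}\to\infty$; indeed $\sum_jz_{nj}$ has divergent real part $-\tfrac{u^2}{2}n^{1/2}$ while $\sum_j\log(1+z_{nj})=iu\sum_j(-1)^jn^{-1/4}\to 0$. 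So the first-order log approximation gives the wrong limit here, and the claimed justification fails.

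The standard repair is to centre the truncated variables: set $\bar\xi_{nj}=\tilde\xi_{nj}-b_{nj}$ and handle the deterministic drift $\sum_jb_{nj}\to b$ separately. Then the first-order term in $\mathbb E[e^{iu\bar\xi_{nj}}]-1$ vanishes, so $|\mathbb E[e^{iu\bar\xi_{nj}}]-1|\lesssim u^2\sigma_{nj}^2$, and $\sum_j|\mathbb E[e^{iu\bar\xi_{nj}}]-1|^2\lesssim u^4\max_j\sigma_{nj}^2\cdot\sum_j\sigma_{nj}^2\to 0$ (using the null-array property and (iii)), which legitimises the log expansion. The Taylor remainder should also be bounded with the two-sided estimate $|R(x)|\leqslant\min(x^2,|x|^3/6)$ split at $|\bar\xi_{nj}|\lessgtr\epsilon$, combining (iii) with (i), rather than with the cubic bound alone; otherwise one again needs $\sum_j\mathbb E[|\tilde\xi_{nj}|^3]$ small, which in the uncentred version is not available. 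Your necessity sketch (symmetrisation, concentration-function/L\'evy inequalities, removal of medians) is the standard approach and is sound in outline.
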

Again denote $Z:=\phi\left(X_{T_t}^x\right)+\int_0^{T_t}g\left(X_s^x\right)ds$, $Y_h^k:=\phi\left(X_{T_t^k}^x\right)+\sum_{i=1}^{\lfloor T_t^k/h\rfloor}hg\left(X_{ih}^x\right)$ where $T_t^k$ are independent samples of the stopping time. Now we will apply Theorem 3.8. to prove Theorem 3.1.

\begin{proof}[\textbf{Proof of Theorem 3.4}]
Let $\xi_{Nj}=\frac{1}{\sqrt{N}}\left(Y_{h_N}^j-\mathbb EZ\right)$, $j=1,...,N$, then for any $\epsilon>0$,
\begin{equation}
\begin{aligned}
\mathbb P[|\xi_{Nj}|>\epsilon]&=\mathbb P[|Y_{h_N}^j-\mathbb EZ|>\sqrt{N}\epsilon]\leqslant \frac{\mathbb E[|Y_{h_N}^j-\mathbb EZ|]}{\sqrt N\epsilon}\\
&\leqslant \frac{\mathbb E[|Y_{h_N}^j-\mathbb E[Y_{h_N}^j]|]+\mathbb E[|Y_{h_N}^j-Z|]}{\sqrt N\epsilon}\\
&\leqslant \frac{var\left(Y_{h_N}^j\right)^\frac{1}{2}+\mathbb E[|Y_{h_N}^j-Z|]}{\sqrt N\epsilon}
\leqslant\frac{\sqrt{M_{t,x}^1}+M_{t,x}^2h_N^{\frac{\gamma}{\beta}}}{\sqrt N\epsilon}\to0\qquad\text{as}\quad N\to\infty,
\end{aligned}
\end{equation}
where $M_{t,x}^1,M_{t,x}^2$ are the same as above.
This implies that $\xi_{Nj}$ converges to 0 in probability uniformly in $N$, and therefore $\left(\xi_{Nj}\right)$ is a null array.\\
Denote $A_{Nj}=\{|Y_{h_N}^j-\mathbb EZ|\leqslant\sqrt N\}=\{|\xi_{Nj}|\leqslant 1\}$. To apply Theorem 3.8, we only need to check that those three conditions hold.
\begin{enumerate}[(i)]
	\item
	We need to prove that for any $\epsilon>0$,
	\begin{equation}
	\sum_{j=1}^N\mathbb P[|\xi_{Nj}|>\epsilon]=\sum_{j=1}^N\mathbb P[|Y_{h_N}^j-\mathbb EZ|>\sqrt N\epsilon]\to0.
	\end{equation}
	Note that
	$$
	\{|Y_{h_N}^j-\mathbb EZ|>\sqrt{N}\epsilon\}\subset\{|Y_{h_N}^j-Z|>\frac{1}{2}\sqrt{N}\epsilon\}\cup\{|Z-\mathbb EZ|>\frac{1}{2}\sqrt{N}\epsilon\}.
	$$
	Hence
	\begin{equation}
	\mathbb P[|Y_{h_N}^j-\mathbb EZ|>\sqrt{N}\epsilon]\leqslant \mathbb P[|Y_{h_N}^j-Z|>\frac{1}{2}\sqrt{N}\epsilon]+\mathbb P[|Z-\mathbb EZ|>\frac{1}{2}\sqrt{N}\epsilon],
	\end{equation}
	\begin{equation}
	\sum_{j=1}^N\mathbb P[|Y_{h_N}^j-\mathbb EZ|>\sqrt{N}\epsilon]
	\leqslant\sum_{j=1}^n\mathbb P[|Y_{h_N}^j-Z|>\frac{1}{2}\sqrt{N}\epsilon]+\sum_{j=1}^n\mathbb P[|Z-\mathbb EZ|>\frac{1}{2}\sqrt{N}\epsilon],
	\end{equation}
	\begin{equation}
	\begin{aligned}
	\sum_{j=1}^N\mathbb P[|Z-\mathbb EZ|>\frac{1}{2}\sqrt{N}\epsilon]&=\sum_{j=1}^N\mathbb E[1;|Z-\mathbb EZ|>\frac{1}{2}\sqrt{N}\epsilon]\\
	&\leqslant\sum_{j=1}^N\mathbb E[\frac{4|Z-\mathbb EZ|^2}{N\epsilon^2};|Z-\mathbb EZ|>\frac{1}{2}\sqrt{N}\epsilon]\\
	&=\mathbb E[\frac{4|Z-\mathbb EZ|^2}{\epsilon^2};|Z-\mathbb EZ|>\frac{1}{2}\sqrt{N}\epsilon]\to0 \text{ as } N\to\infty,
	\end{aligned}
	\end{equation}
	\begin{equation}
	\sum_{j=1}^N\mathbb P[|Y_{h_N}^j-Z|>\frac{1}{2}\sqrt{N}\epsilon]\leqslant\sum_{j=1}^N\mathbb E[|Y_{h_N}^j-Z|]/(\frac{1}{2}\sqrt{N}\epsilon)\leqslant2\sqrt NM_{t,x}^1h_N^{\frac{\gamma}{\beta}}/\epsilon\to0 \text{ as } N\to\infty.
	\end{equation}
	Together with (3.43) and (3.44) we know that (3.40) holds.
	
	\item We need to prove that
	\begin{equation}
	\sum_{j=1}^N\mathbb E[\xi_{Nj};|\xi_{Nj}|\leqslant1]
=\frac{1}{\sqrt N}\sum_{j=1}^N\mathbb E[Y_{h_N}^j-\mathbb EZ;A_{Nj}]\to0\text{ as }N\to\infty.
	\end{equation}
	Since
	\begin{equation}
	\begin{aligned}
	\frac{1}{\sqrt N}\sum_{j=1}^N\mathbb E[Y_{h_N}^j-\mathbb EZ]=\frac{1}{\sqrt N}\sum_{j=1}^N\mathbb E[Y_{h_N}^j-Z]\leqslant\frac{1}{\sqrt N}\sum_{j=1}^N\mathbb E[|Y_{h_N}^j-Z|]\\
	\leqslant\frac{1}{\sqrt N}\sum_{j=1}^NM_{t,x}^2h_N^{\frac{\gamma}{\beta}}\to0\text{ as }N\to\infty,
	\end{aligned}
	\end{equation}
	we only need to prove
	\begin{equation}
	\frac{1}{\sqrt N}\sum_{j=1}^N\mathbb E[Y_{h_N}^j-\mathbb EZ;A_{Nj}^C]\to 0 \text{ as } N\to\infty.
	\end{equation}
	For a random variable $X$, we denote $X_+=\max\{X,0\}$.
	Then
	\begin{equation}
	\begin{aligned}
	\frac{\sqrt N}{N}\sum_{j=1}^N\mathbb E[\left(Y_{h_N}^j-\mathbb EZ\right)_+;A_{Nj}^C]&=\sqrt{N}\mathbb E[\left(Y_{h_N}^1-\mathbb EZ\right)_+;A_{N1}^C]\\
	(\text{note that } A_{N1}^C=\{|Y_{h_N}^1-\mathbb EZ|>\sqrt N\})&\leqslant\mathbb E[\left(Y_{h_N}^1-\mathbb EZ\right)_+^2;A_{N1}^C]\\
	&\leqslant\mathbb E[2\left(Y_{h_N}^1-Z\right)^2+2\left(Z-\mathbb EZ\right)^2;A_{N1}^C].
	\end{aligned}
	\end{equation}
	By Proposition 3.4,
	\begin{equation}
	\mathbb E[\left(Y_{h_N}^1-Z\right)^2]\leqslant M_{t,x}^3h_N^{\frac{2\gamma}{\beta}}\to0\text{ as }N\to\infty.
	\end{equation}
	Since $\mathbb P[A_{N1}]\to0$ as $N\to\infty$, we have
	\begin{equation}
	\mathbb E[\left(Z-\mathbb EZ\right)^2;A_{N1}^C]\to0\text{ as }N\to\infty.
	\end{equation}
	Together with (3.49) and (3.50) we have
	\begin{equation}
	\frac{1}{\sqrt N}\sum_{j=1}^N\mathbb E[\left(Y_{h_N}^j-\mathbb EZ\right)_+;A_{Nj}^C]\to 0.
	\end{equation}
	Similarly, denote $X_-:=\max\{-X,0\}$. Then
	\begin{equation}
	\frac{1}{\sqrt N}\sum_{j=1}^N\mathbb E[\left(Y_{h_N}^j-\mathbb EZ\right)_-;A_{Nj}^C]\to 0.
	\end{equation}
	Combining (3.51) and (3.52) we get (3.47) holds.
	\item We want to prove
	\begin{equation}
	\sum_{j=1}^NVar[\xi_{Nj};|\xi_{Nj}|\leqslant1]=\sum_{j=1}^N\frac{1}{N}Var[Y_{h_N}^j-\mathbb EZ;A_{Nj}]\to var\left(Z\right)\text{ as }N\to\infty.
	\end{equation}
	 We have following equality
	\begin{equation}
	Var\left(Y_{h_N}^j-\mathbb EZ;A_{Nj}\right)=\mathbb E[|Y_{h_N}^j-\mathbb EZ|^2\mathbb I_{A_{Nj}}]-\left(\mathbb E[\left(Y_{h_N}^j-\mathbb EZ\right)\mathbb I_{A_{Nj}}]\right)^2.
	\end{equation}
	Note that
	\begin{equation}
	\begin{aligned}
	\frac{1}{N}\sum_{j=1}^N\left(\mathbb E[\left(Y_{h_N}^j-\mathbb EZ\right)\mathbb I_{A_{Nj}}]\right)^2&=\frac{1}{N}\sum_{j=1}^N\left(\mathbb E[\left(Y_{h_N}^j-Z+Z-\mathbb EZ\right)\mathbb I_{A_{Nj}}]\right)^2\\
	&\leqslant\frac{2}{N}\sum_{j=1}^N\left(\mathbb E[\left(Y_{h_N}^j-Z\right)\mathbb I_{A_{Nj}}]^2+\mathbb E[\left(Z-\mathbb EZ\right)\mathbb I_{A_{Nj}}]^2\right).
	\end{aligned}
	\end{equation}
	Since
	\begin{equation}
	\mathbb E[\left(Y_{h_N}^j-Z\right)\mathbb I_{A_{Nj}}]\leqslant\mathbb E[|Y_{h_N}^j-Z|]\leqslant M_{t,x}^2h_N^{\frac{\gamma}{\beta}},
	\end{equation}
	We have
	\begin{equation}
	\frac{1}{N}\sum_{j=1}^N\mathbb E[\left(Y_{h_N}^j-Z\right)\mathbb I_{A_{Nj}}]^2\to0\text{ as }N\to\infty.
	\end{equation}
	Note that $0-\mathbb E[(Z-\mathbb EZ)\mathbb I_{A_{N1}}]=\mathbb E[(Z-\mathbb EZ)\mathbb I_{A_{N1}^C}]$ and $\mathbb P[A_{N1}^C]\to0$ as $N\to\infty$, hence
	\begin{equation}
	\frac{2}{N}\sum_{j=1}^N\mathbb E[\left(Z-\mathbb EZ\right)\mathbb I_{A_{Nj}}]^2=2\mathbb E[\left(Z-\mathbb EZ\right)\mathbb I_{A_{N1}^C}]^2\to0\text{ as }N\to\infty.
	\end{equation}
	
	Together with (3.57) and (3.58) we know the right hand side of (3.55) converges to 0, and therefore from (3.54) we only need to prove
	\begin{equation}
	\frac{1}{N}\sum_{j=1}^N\mathbb E[|Y_{h_N}^j-\mathbb EZ|^2\mathbb I_{A_{Nj}}]=\mathbb E[|Y_{h_N}^1-\mathbb EZ|^2\mathbb I_{A_{N1}}]\to Var\left(Z\right).
	\end{equation}
	Note that
	\begin{equation}
	\begin{aligned}
	\mathbb E[\left(Y_{h_N}^1-\mathbb EZ\right)^2]-\mathbb E[\left(Z-\mathbb EZ\right)^2]
	&=\mathbb E[(Y_{h_N}^1+Z-2\mathbb EZ)\left(Y_{h_N}^1-Z\right)]\\
	&\leqslant(\mathbb E[(Y_{h_N}^1+Z-2\mathbb EZ)^2]\mathbb E[(Y_{h_N}^1-Z)^2])^{\frac{1}{2}}.
	\end{aligned}
	\end{equation}
	Since $\mathbb E[Z^2]<\infty$ and $\mathbb E[|Y_{h_N}-Z|^2]<M_{t,x}^3h^{2\frac{\gamma}{\beta}}$, we know $E[|Y_{h_N}|^2]$ have a uniform upper bound for all $N$. Hence $\mathbb E[(Y_{h_N}^1+Z-2\mathbb EZ)^2]$ have a uniform upper bound for all $N$. Hence the right hand side of (3.60) converges to $0$ as $N\to\infty$. \\
	Therefore we only need to show
	\begin{equation}
	\mathbb E[|Y_{h_N}^1-\mathbb EZ|^2\mathbb I_{A_{N1}^C}]\to0 \text{ as }N\to\infty.
	\end{equation}
	In fact, this is true because
	\begin{equation}
	\mathbb E[|Y_{h_N}^1-\mathbb EZ|^2\mathbb I_{A_{N1}^C}]\leqslant2\mathbb E[|Y_{h_N}^1-Z|^2\mathbb I_{A_{N1}^C}]+2\mathbb E[|Z-\mathbb EZ|^2\mathbb I_{A_{N1}^C}].
	\end{equation}
\end{enumerate}
\end{proof}
\begin{rem}
The choice of $h_k$ is not unique. In fact, they only need to satisfy $\sqrt{N}h_N^{\frac{\gamma}{\beta}}\to0$ as $N\to\infty$.
\end{rem}
\section{Simulation and algorithm}
Now we study how the starting level $t$ of the decreasing subordinator and the starting point $x$ of the stable process $X$ influence the Monte Carlo estimator (1.4).\\

We set $d=1$, $\alpha=1/2$, $\beta=3/2$,  and denote $\bar a=t-a$. In section 4.1 and 4.2 we set $\phi\left(x\right)=|x|^{\frac{1}{2}}$.
\subsection{Unbiased FPED}
Now the estimator is (1.5). Recall that $X_{T_t}^x\overset{d}=T_t^\frac{1}{\beta}X_1+x\overset{d}=\left(\frac{\bar a}{\tau_1}\right)^{\frac{\alpha}{\beta}}X_1+x$.
\begin{algorithm}[H]
\caption{Sample $u_N\left(t,x\right)$}
\begin{algorithmic}[1]
\STATE $u=0$;
\FOR{$k=1:N$}
\STATE sample $Y_1$;
\STATE $T_t=\left(\frac{\bar a}{\tau_1}\right)^\alpha$;
\STATE sample $X_1$;
\STATE $X=T_t^{\frac{1}{\beta}}X_1+x$;
\STATE $u=u+\phi\left(X\right)$;
\ENDFOR
\STATE $\bar u=u/N$;
\RETURN $\bar u$.
\end{algorithmic}
\end{algorithm}
First we set $N=10^5$. Let $x=0$ and $\bar a$ increase from $1$ to $10$.

\begin{figure}[H]
    \centering
    \subfigure[$u_N$ when $g=0,x=0$]{
    \includegraphics[width=0.45\textwidth]{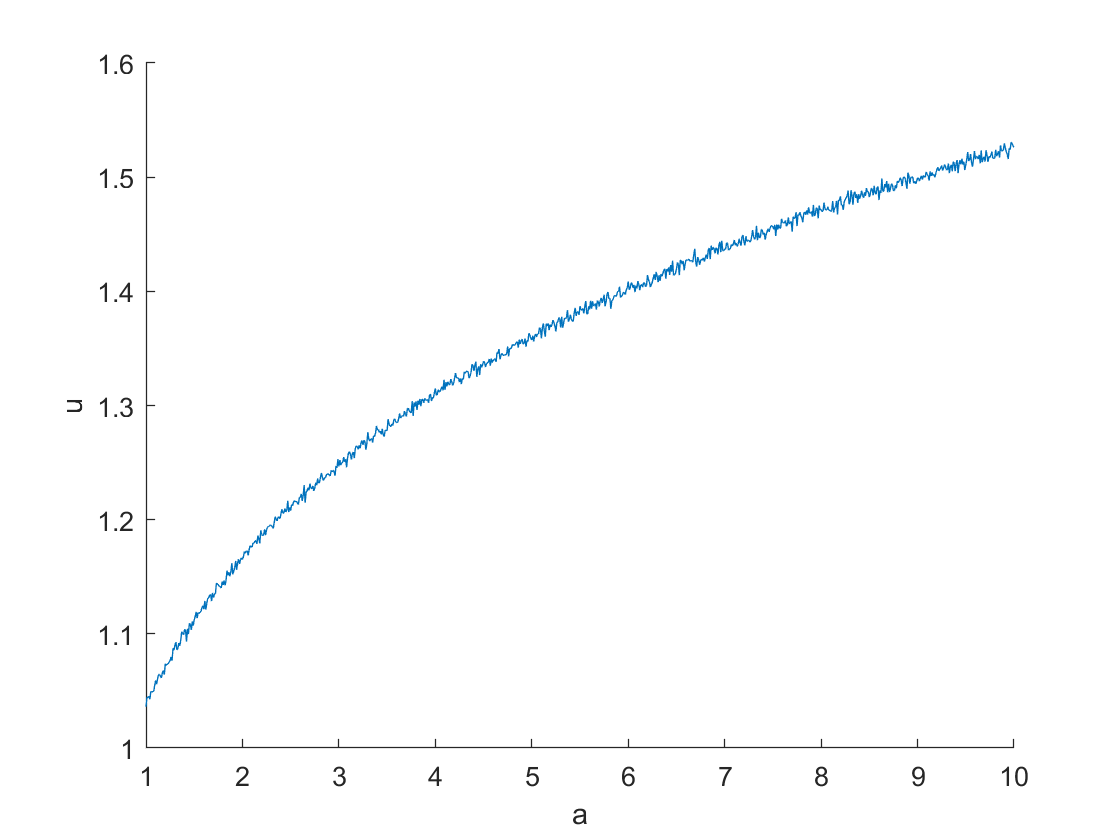}
}
    \subfigure[$u_N/{\bar a}^{\frac{1}{6}}$]{
    \includegraphics[width=0.45\textwidth]{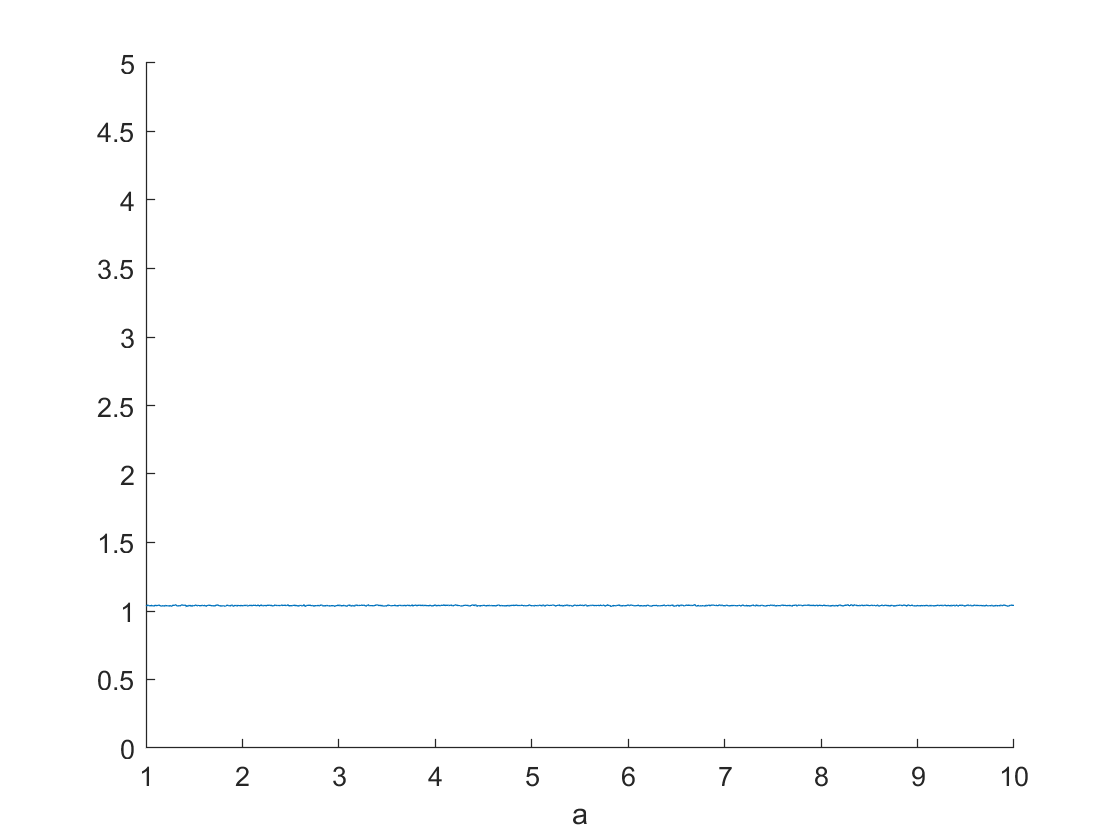}
}
    \caption{.}
 \end{figure}

In fact, now we have
\begin{equation}
\mathbb E[|X_{T_t}^x|^{\frac{1}{2}}]=\mathbb E[\left(\frac{\bar a}{\tau_1}\right)^{\frac{\alpha}{2\beta}}|X_1|^{\frac{1}{2}}]=\bar a^{\frac{\alpha}{2\beta}}\mathbb E[\tau_1^{-\frac{\alpha}{2\beta}}]\mathbb E[|X_1|^{\frac{1}{2}}]
\end{equation}
We can check our result by the right figure (b) `$u_N/\left(\bar a\right)^{\frac{\alpha}{\beta}}$' above. It is almost a constant, which means our algorithm is correct.
\subsection{FPDE with bias}
We set $g\left(x\right)=|x|^{\frac{1}{2}}$.
\begin{algorithm}[H]
\caption{Sample $u_N^h\left(t,x\right)$}
\begin{algorithmic}[1]
\STATE $u=0$;
\FOR{$k=1:N$}
\STATE sample $Y_1$;
\STATE $T_t=\left(\frac{\bar a}{\tau_1}\right)^\alpha$;
\STATE sample $X_1^j$, $j=1,...,\lfloor T_t/h\rfloor$;

\STATE $S=0$;
\STATE $X=x$;
\STATE sample $X_1'$;
\FOR{$j=1:\lfloor T_t/h\rfloor$}
\STATE $X=X+h^{\frac{1}{\beta}}X_1^j$;
\STATE $S=S+hg\left(X\right)$;
\ENDFOR
\STATE $X=X+\left(T_t-h\lfloor T_t/h\rfloor\right)^{\frac{1}{\beta}}X_1'$;
\STATE $u=u+\phi\left(X\right)+S$;
\ENDFOR
\STATE $\bar u=u/N$;
\RETURN $\bar u$.
\end{algorithmic}
\end{algorithm}
Figure 4.2 is the figure of $u_N^h$ when $x=0, h=0.01, N=10^5$ and we change $a$ from $1$ to $10$.
\begin{figure}[H]
 \centering
 \includegraphics[width=0.5\textwidth]{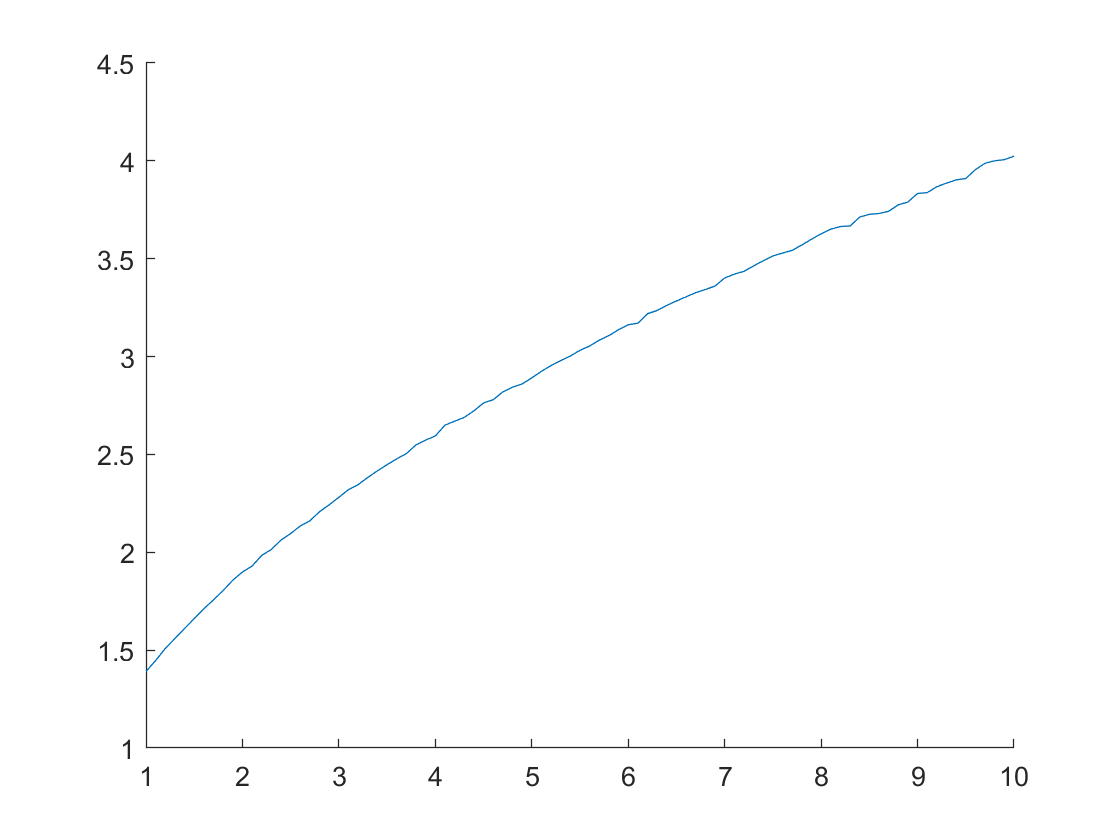}
 \caption{$x=0, \bar a=1:0.1:10$}

\end{figure}
Similarly, recall that $T_t\overset{d}= \left(\frac{\bar a}{\tau_1}\right)^\alpha$,
\begin{equation}
\begin{aligned}
\mathbb E[\int_0^{T_t}g\left(X^x_s\right)ds]&=\mathbb E[\mathbb E[\int_0^{T_t}|X^0_s|^{\frac{1}{2}}ds|T_t]]\\
&=\mathbb E[\mathbb E[\int_0^{T_t}|X^0_1|^{\frac{1}{2}}s^{\frac{1}{2\beta}}ds|T_t]]\\
&=\mathbb E[|X^0_1|^{\frac{1}{2}}]\mathbb E[T_t^{1+\frac{1}{2\beta}}]/\left(1+\frac{1}{2\beta}\right)\\
&=\bar a^{\frac{\alpha\left(1+2\beta\right)}{2\beta}}\mathbb E[|X^0_1|^{\frac{1}{2}}]\mathbb E[\tau_1^{-\frac{\alpha\left(1+2\beta\right)}{2\beta}}]/\left(1+\frac{1}{2\beta}\right).
\end{aligned}
\end{equation}
We can check our result by Figure 4.3, which is almost a constant.
\begin{figure}[H]
 \centering
 \includegraphics[width=0.5\textwidth]{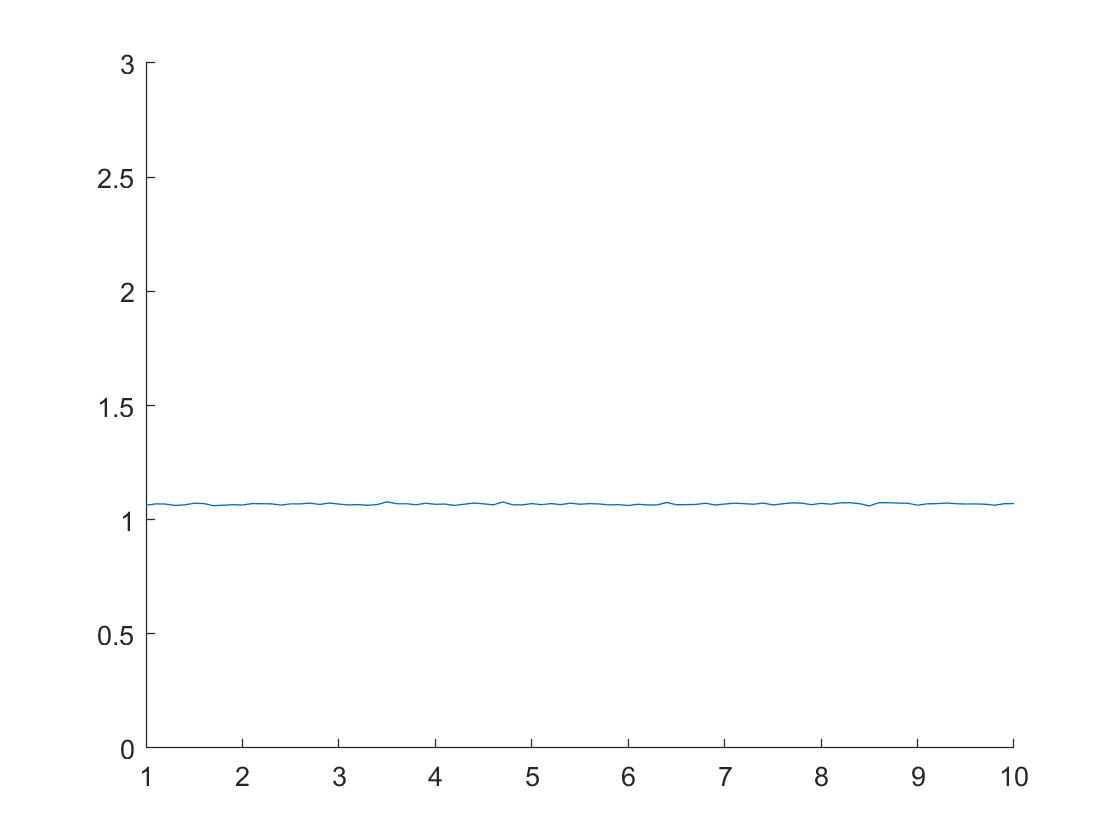}
 \caption{We set $x=0, \bar a=1:0.1:10$. As we can see, $\frac{1}{N}\sum_{k=1}^N\sum_{i=1}^{\lfloor T_t^k/h\rfloor}hg(X_{t_i^k}^k)/(\bar a)^{2/3}$ is almost a constant, which is consistent with our calculation in (4.2)}

\end{figure}
Below is the figure of $u_N^h$ when we fix $\bar a=5$, and $x=0:0.1:10$.
\begin{figure}[H]
 \centering
 \includegraphics[width=0.5\textwidth]{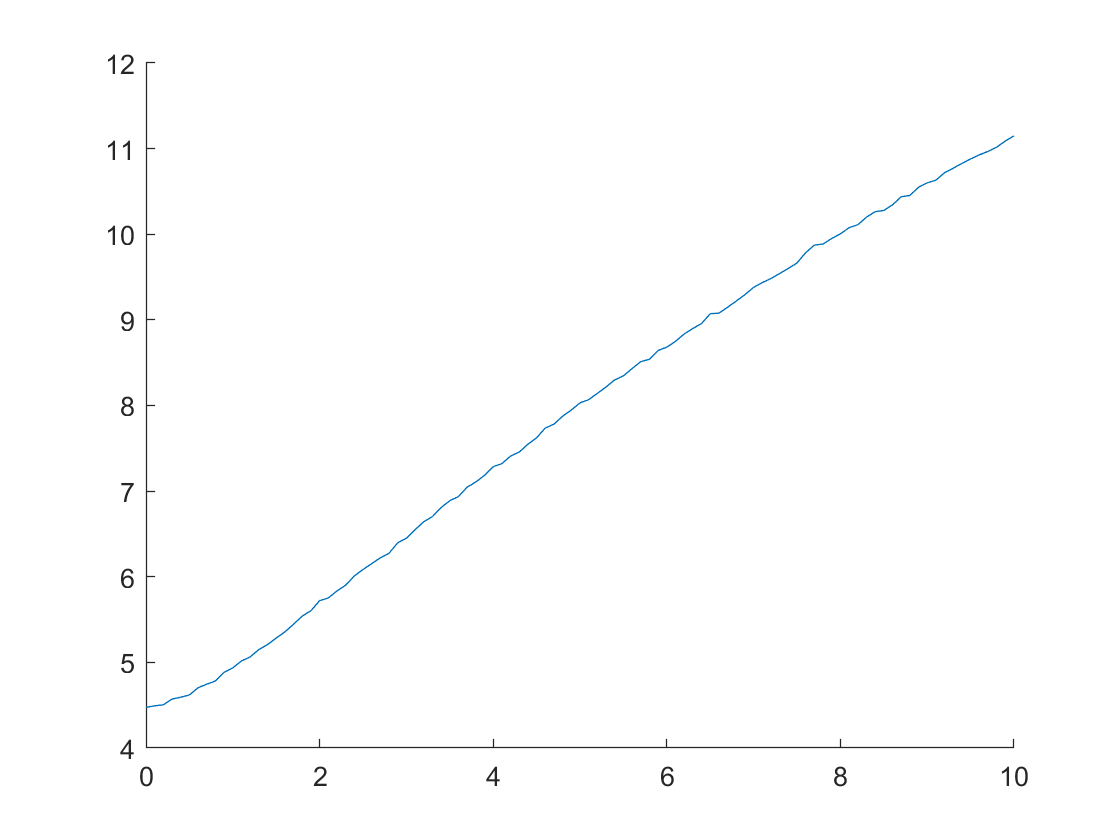}
 \caption{$a=5, x=0:0.1:10$}
\end{figure}
\subsection{Confidence interval}
For simplicity, we set $\phi\left(x\right)\equiv1$, $g\left(x\right)=|x|^{\frac{1}{2}}$, $\bar a=t-a=1$, $x=0$, $h=10^{-3}$. Recall our discussion in Remark 3.5. We only need the upper bound of $\mathbb E\left[Z(t,x)^2\right]$ in this example. In fact, in Remark 3.7.1, we have already got the computable upper bound of $\mathbb E\left[Z(t,x)^2\right]$. Figure 4.5 presents the asymptotic confidence intervals at level $95\%$.
\begin{figure}[H]
 \centering
 \includegraphics[width=0.5\textwidth]{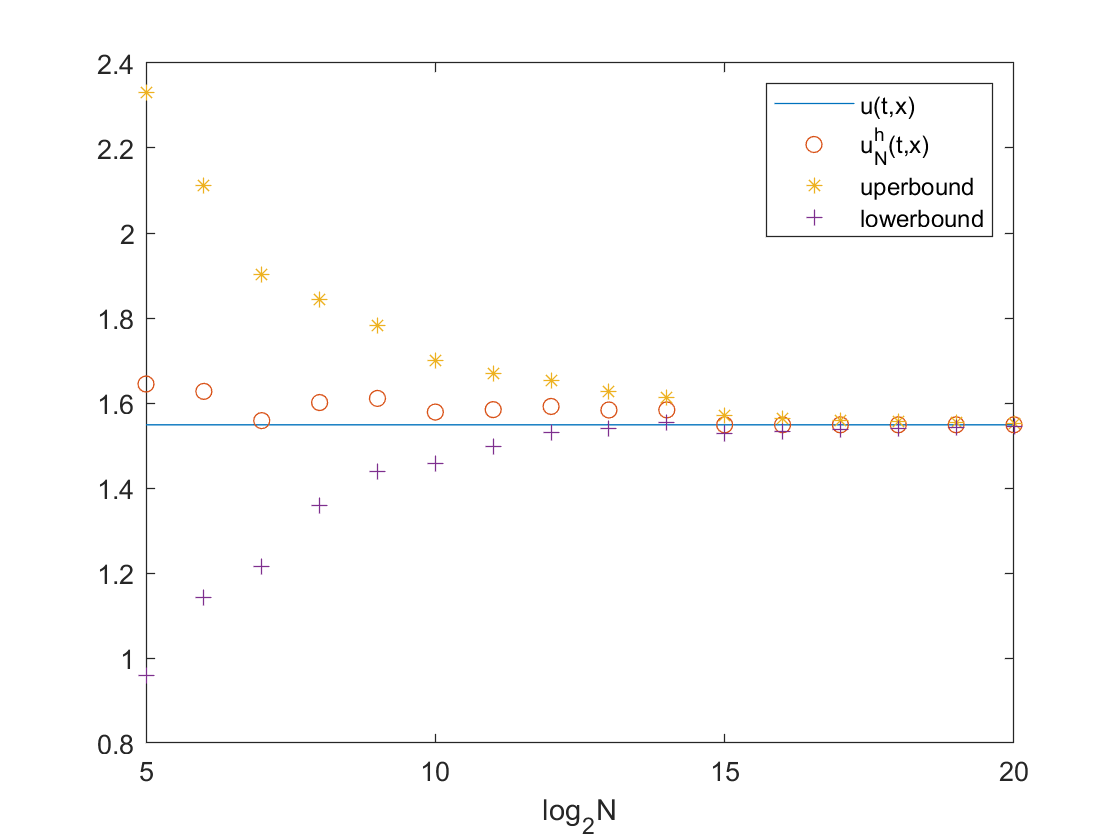}
 \caption{Confidence interval at level $95\%$}

\end{figure}
\bibliographystyle{plain}
\bibliography{refs}

\section*{Acknowledgement}
\noindent
VK is supported by the Russian Science Foundation
project no. 20-11-20119.
FL is supported by the China Scholarship Council PhD award at Warwick.
AM is supported by The Alan Turing Institute under the EPSRC grant EP/N510129/1 and
by the EPSRC grant EP/P003818/1 and the Turing Fellowship funded by the Programme on Data-Centric Engineering of Lloyd's Register Foundation;

\end{document}